\documentclass[a4paper, 12pt] {article}
\usepackage[cp1251]{inputenc}
\usepackage[T1]{fontenc}
\usepackage[english]{babel}
\usepackage{amsfonts}
\usepackage{mathtext}
\usepackage{cite}

\usepackage{graphicx}

\usepackage{verbatim}
\usepackage{amsmath}
\usepackage{amsthm}
\usepackage{amssymb}
\usepackage{delarray}
\usepackage{mathrsfs}
\usepackage{xcolor}
\usepackage{tikz}

\usetikzlibrary{decorations.pathreplacing,matrix,calc,positioning}

\theoremstyle{plain}
\newtheorem{theorem}{\indent Theorem}
\newtheorem{lemma}{\indent Lemma}
\newtheorem{corollary}{\indent Corollary}

\newtheorem{proposition}{\indent Proposition}
\newtheorem{IP}{\indent Inverse problem}

\theoremstyle{remark}

\theoremstyle{plain}

\newtheorem*{Hypothesis*}{\indent Hypothesis}

\theoremstyle{definition}
\newtheorem{remark}{\indent Remark}

\newtheorem*{Question*}{\indent Question}
\newtheorem*{Definition*}{\indent Definition}
\newtheorem*{Notation*}{\indent Notation}

\usepackage{hyperref}

\numberwithin{equation}{section}

\begin{document}
\begin{center}
{\bf\Large Uniform stability of recovering the Sturm--Liouville operator on a star-graph
}
\end{center}

\begin{center}
{\bf\large Maria Kuznetsova\footnote{Saratov State University, Saratov, Russia, e-mail: {\it kuznetsovama@sgu.ru}}}
\end{center}

\noindent{\bf Abstract.}
 In the paper, we study the problem of recovering the Sturm--Liouville operator
on a star-graph from the Weyl vector. It generalizes the problem of recovering
the classical Sturm-Liouville operator on an interval from the Weyl function,
and the problems of recovering from other spectral data can be reduced to this
problem. The uniqueness and the constructive method for solving the problem
under study were previously obtained by V.A. Yurko in the case of a tree
(Inverse Problems, 2005). Here, we prove its uniform stability, which includes
Lipschitz estimates with a constant depending only on the number bounding the
norms of the potentials. Stability results are necessary for justifying the
well-posedness of the problem statement, and they are important for developing
numerical methods. As auxiliary results, we obtain the uniform stability of the
direct problem, as well as the uniform stability of the partial derivatives of
the transmutation operator kernel related to the classical Sturm--Liouville
operator.


\medskip
\noindent {\it Keywords}: inverse spectral problem, Sturm--Liouville operator, Weyl vector, uniform stability, star-graph, transmutation operator kernel.

\medskip
\noindent {\it 2010 Mathematics Subject Classification}: 34A55
\\

\section{Introduction}
This paper is devoted to an inverse spectral problem for the Sturm--Liouville differential operator on a graph.
Differential operators on graphs are actively studied due to their applications to organic chemistry, photonic crystal theory, waveguide theory, and nanotechnology, see, e.g.,~\cite{swen, kuchment, korot}.
A general theory of differential operators on graphs, as well as an extensive bibliography, can be found in the monographs of Yu.V.~Pokorny et al.~\cite{pok}, G.~Berkolaiko and P.~Kuchment~\cite{berk}, and P.~Kurasov~\cite{kur}.

At the beginning of the 21st century, there arose significant interest in inverse spectral problems for differential operators on graphs, see~\cite{kur,avd,umn,A,yur-cycle,cycle,bessel,A-ign,moch,Liu,but,wang,part,arxiv,ign,brown,bel,yurko2005,math,filippo,finco,piv,fam,tree}.
Such problems consist in recovering operators from their spectral characteristics.
The first general results on inverse spectral problems on graphs were obtained for the Sturm--Liouville operators on tree graphs.
A constructive method of solution was first proposed by V.~A.~Yurko~\cite{yurko2005}. 
We also point out the works~\cite{bel,brown}, where the uniqueness of recovering from a larger number of spectral characteristics than in~\cite{yurko2005} was proved.
Later on, there were studied the inverse problems for differential operators on graphs with cycles~\cite{umn,A,yur-cycle,cycle,kur} and on non-compact graphs~\cite{umn,bessel,A-ign,moch}.
Note that a differential operator on a graph can be considered as a special case of the matrix operator (see~\cite{AMP,ipse,xu}), but for recovering the matrix operator more spectral characteristics are required.

Here, we consider the following inverse problem for the Sturm--Liouville operator on a star-graph: recover the potential from $N-1$ Weyl functions, where $N$ is the number of boundary (pendant) vertices.
This problem was first formulated
in the paper~\cite{yurko2005} in the more general case of a tree. It generalizes the inverse problem of recovering the classical Sturm--Liouville operator on an interval from a single Weyl function, see~\cite{freil}. Moreover, the problems of recovering from the spectral data or from several spectra reduce to the problem of recovering from $N-1$ Weyl functions, see~\cite{umn}.

Now, we provide a precise statement of the problem under study. Let $\Gamma$
be a star-graph consisting of $N>1$ edges $\{ e_j \}_{j=1}^N$ of the same length $\pi,$
having one common internal vertex.
The Sturm--Liouville equation on $\Gamma$ reduces to the set of equations
\begin{equation} \label{eq}
-y_j''(x_j) + q_j(x_j) y_j(x_j) = \lambda y_j(x_j), \quad x_j \in (0, \pi),
\quad j = \overline{1, N},
\end{equation}
with the standard matching conditions
\begin{equation} \label{cont}
\begin{split}
y_1(0) = y_j(\pi), \quad j = \overline{2, N},
\\
y'_1(0) = \sum_{j=2}^N y'_j(\pi),
\end{split}
\end{equation}
where $q_j \in L_2(0, \pi),$ $j=\overline{1, N}.$
The vector function $\mathrm{q} = [q_j ]_{j=1}^N$ with the norm
$\| \mathrm{q} \| = \max_{j=\overline{1, N}} \|q_j\|_{L_2(0, \pi)}$ is called potential on the graph.

A vector function $\mathrm{y} = [y_j]_{j=1}^N,$ whose components satisfy system~\eqref{eq}--\eqref{cont}, is called solution of this system.
We introduce the Weyl solution $\Phi_k(x, \lambda) = [ \Phi_{jk}(x, \lambda)]_{j=1}^N$ with the index $k=\overline{2, N}$ as the solution
under the boundary conditions
\begin{equation} \label{Weyl bc}
\Phi_{1k}(\pi, \lambda) = 0, \quad \Phi_{jk}(0, \lambda) = \delta_{jk}, \quad j, k=\overline{2, N},
\end{equation}
where $\delta_{jk}$ is the Kronecker delta. Weyl functions are introduced as
$$M_k(\lambda) = \Phi'_{kk}(0, \lambda), \quad k=\overline{2, N}.$$
The Weyl vector is ${\mathrm M}(\lambda) = [M_k(\lambda)]_{k=2}^N.$
Let us formulate an inverse problem.
\begin{IP} \label{main ip}
Given the Weyl vector ${\mathrm M}(\lambda),$ recover $\mathrm{q}.$
\end{IP}
The uniqueness and the constructive method for Inverse problem~\ref{main ip} were obtained in~\cite{yurko2005}.
Here, we prove its uniform stability. Typically, the stability of inverse spectral problems (see~\cite{freil,sav,borg-compl,sav2}) involves Lipschitz estimates of the form $\| \mathrm{q} - \tilde{\mathrm q}\| \le C\varepsilon,$ where $\mathrm{q}$ and $\tilde{\mathrm q}$ are the coefficients of two different operators, and
$\varepsilon$ is the distance between their spectral characteristics in an appropriate metric. Stability is uniform if the constant $C$ in the estimate is the same for all $\mathrm{q}$ and $\tilde{\mathrm q}$ bounded in norm by a fixed number.

The stability of recovering the Sturm--Liouville differential operators on graphs was studied by Mochizuki~K. and Trooshin~I.~\cite{moch}, Bondarenko~N.P.~\cite{cycle,tree}, Chitorkin~E.E. and Bondaren\-ko~N.P.~\cite{arxiv}. In the paper~\cite{moch}, some estimates were obtained for the scattering problem on the lasso graph with a cycle and an infinite edge, in which, however, spectral characteristics do not participate explicitly. In~\cite{cycle,tree}, the uniform stability of recovering from the characteristic functions was proved  in the case of a graph with a cycle and in the case of a tree with a singular potential.
In the recent work~\cite{arxiv}, the stability of the inverse problem from the spectral data (the spectrum and the weight numbers) was obtained for a real-valued potential on a star graph. Here, we prove the uniform stability of recovering a complex-valued potential from the Weyl vector. This result does not follow from the stability of the inverse problems from the characteristic functions and from the spectral data, due to the possibility of multiple eigenvalues.
On the other hand, the stability of recovering from the characteristic functions can be easily derived from the stability of Inverse problem~\ref{main ip}, see Section~\ref{char}.

Let us provide the main result.
For this purpose, we need the numbers $$\omega_j = \frac12 \int_0^\pi q_j(t)\, dt, \quad j=\overline{1, N}.$$
Let $\mathrm{q}$ and $\mathrm{\tilde q}$ be different potentials on $\Gamma.$ We agree that if a symbol $\alpha$ denotes an object related to the first potential $\mathrm{q},$ then the symbol $\tilde \alpha$ denotes the similar object related to the second potential $\mathrm{\tilde q}.$ For brevity, we put $\hat \alpha = \alpha - \tilde \alpha.$
For an arbitrary $\tau >0,$ we introduce the quantity
\begin{equation*}\label{tau-norm}
\|\mathrm{\hat M}\|_{L_2({\mathbb R}+i \tau)} := \max_{k=\overline{2, N}} \sqrt{\mathop{\int}_{{\mathbb R}+i \tau}\big|\hat M_k(\rho^2)\big|^2 \,d\rho}.
\end{equation*}

\begin{theorem}[stability of the inverse problem] \label{main th}
Let $R>0$ be given and
\begin{equation}
\label{b}
b = \max\left\{ \frac{\ln 4}{2 \pi}, \frac{4 R (\sqrt{\pi} + \sqrt{2\pi} + 24 \pi^2 R \exp(2\pi^\frac32 R))}{\sqrt[N]{8^N + 3^{N-1}}-8} \right\}.\end{equation}
If $\mathrm{q}$ and $\mathrm{\tilde q}$ are potentials on the graph satisfying the conditions
\begin{equation} \label{omega eq}
\hat \omega_j = 0, \; j=\overline{1, N},
\end{equation}
\begin{equation} \label{q bounded}
\| \mathrm{q}\| < R, \quad \| \tilde{\mathrm{q}} \| < R,
\end{equation}
then
\begin{equation*} \label{main ineq}
\| \hat{\mathrm{q}}\| \le A_R \|\hat{\mathrm M}\|_{L_2({\mathbb R}+ib)},
\end{equation*}
where
the constant $A_R > 0$ depends only on $R.$
\end{theorem}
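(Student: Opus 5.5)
We follow Yurko's constructive procedure for star graphs and make each step quantitative with constants depending only on $R$. The procedure first recovers the potentials on the boundary edges $e_2,\dots,e_N$ from the asymptotics and structure of $M_k$. It then recovers $q_1$ on the edge $e_1$ from a Weyl-type function built from the already known data.

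Write $\rho = \sigma + i\tau$ with $\tau = b$, and use transmutation operators on each edge. Let $S_j(x,\lambda)$ and $C_j(x,\lambda)$ be the sine- and cosine-type solutions on $e_j$. Let $\psi_j$ be the solution with $\psi_j(\pi)=0$, $\psi_j'(\pi)=1$ (or the appropriate normalization at the interior vertex). Then $M_k(\lambda)$ can be expressed through quantities at the internal vertex. Solving the matching conditions gives
\begin{equation*}
M_k(\lambda) = -\frac{\Delta_k(\lambda)}{\Delta(\lambda)},
\end{equation*}
where $\Delta$ is a characteristic determinant built from $S_j(\pi,\lambda)$, $S_j'(\pi,\lambda)$, $C_j(\pi,\lambda)$, $C_j'(\pi,\lambda)$ and the edge-$1$ solution. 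The ratio structure shows that, up to a function common to all $k$, $M_k$ equals the Weyl function $m_k(\lambda) = -C_k(\pi,\lambda)/S_k(\pi,\lambda)$-type term of the individual edge plus a remainder depending on the other edges.

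\textbf{Step 1: uniform stability of the direct problem.} Using transmutation kernels $K_j(x,t)$ and the assumed bounds on their partial derivatives (estimates of the type $|K_j|,\|\partial K_j\|\le C(R)$, and Lipschitz dependence on $q_j$), show that on the line $\operatorname{Im}\rho=b$ the functions $\hat S_j(\pi,\rho^2)$, $\hat S_j'(\pi,\rho^2)$, etc., divided by $e^{|\tau|\pi}/|\rho|$, belong to $L_2$ with norm $\le C(R)\|\hat q_j\|$. The condition $\hat\omega_j=0$ removes the leading $\omega_j\cos\rho\pi/\rho$ discrepancy, so these differences are genuinely of lower order, namely Fourier transforms of $L_2$ functions.

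\textbf{Step 2: uniform lower bound for the characteristic determinant on $\operatorname{Im}\rho=b$.} This is where the specific value of $b$ in \eqref{b} enters. For the unperturbed operator the relevant determinant behaves like a combination of $\cos^{N-1}\rho\pi\,\sin\rho\pi$-type terms. The choice $b\ge \ln 4/(2\pi)$ gives $|\cos\rho\pi|,|\sin\rho\pi|\ge c\,e^{b\pi}$. The second term in \eqref{b}, with its $\sqrt[N]{8^N+3^{N-1}}-8$ denominator, is exactly what is needed so that the perturbation, bounded by $R(\sqrt\pi+\sqrt{2\pi}+24\pi^2R e^{2\pi^{3/2}R})/|\rho|$, does not destroy this lower bound for all $|\rho|\ge b$. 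This gives $|\Delta(\rho^2)|\ge c(R)|\rho|^{\ast}e^{N b\pi}$ uniformly on the line.

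\textbf{Step 3: reduction to local problems on edges.} From the expression of $M_k$, isolate for each $k\ge 2$ a combination of $M_k$ and the other Weyl functions that equals the classical Weyl function of the edge $e_k$ (boundary-to-vertex problem with known conditions). Using Step 2 to divide, obtain $\|\hat m_k\|_{L_2(\mathbb R+ib)}\le C(R)\|\hat{\mathrm M}\|_{L_2(\mathbb R+ib)}$. Then apply the uniform stability of the classical inverse problem on an interval from the Weyl function on a horizontal line (a Savchuk--Shkalikov/Buterin-type result, assumed available) to get $\|\hat q_k\|\le C(R)\|\hat{\mathrm M}\|$ for $k\ge 2$. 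With $q_k$ controlled, Step 1 controls the vertex data of these edges. Solving the matching condition for edge $1$ gives its Weyl function $m_1$ as a rational expression in $M_2$ and the edge-$k$ data, with denominators bounded below on the line. The same interval stability then yields $\|\hat q_1\|\le C(R)\|\hat{\mathrm M}\|$.

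\textbf{Main obstacle.} I expect the main obstacle to be Step 2 together with the division in Step 3. We need lower bounds on the denominators uniform over all potentials in the ball of radius $R$, and complex potentials may produce multiple or clustering eigenvalues. This is why one works on the line $\operatorname{Im}\rho=b$ rather than with spectral data. I would try first to bound the perturbed determinant directly by comparing it with the free one via an elementary inequality of the form $\prod(8+\delta)\le 8^N+3^{N-1}$. This inequality is the source of the constant in \eqref{b}.
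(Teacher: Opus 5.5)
\textbf{Gap in Step 3.} For the edges $e_2,\dots,e_N$ your Step 3 does not work. No combination of $M_2,\dots,M_N$ equals the single-edge Weyl function $m_k(\lambda)=-C_k(\pi,\lambda)/S_k(\pi,\lambda)$. From \eqref{F_k} one gets
\[
M_k(\lambda)=m_k(\lambda)+\frac{\prod_{l\ne k}S_l(\pi,\lambda)}{S_k(\pi,\lambda)\,\Delta(\lambda)},
\]
and the correction term depends on $q_k$ itself (through $S_k$ and $\Delta$), not only on the other edges.

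For $N=2$ the entire datum is the single function $M_2$, which is the Weyl function of an interval of length $2\pi$. No algebraic manipulation extracts $m_2$ from it. In general, bounding $\hat m_k$ by $\|\hat{\mathrm M}\|$ would already require a bound on $\hat q_k$, so the argument is circular. The interval black box therefore has nothing it can be applied to for $k\ge 2$.

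The black box itself is also not something you can simply cite. You would need Lipschitz stability of the interval problem from the Weyl function on $\mathbb R+ib$ for \emph{complex} potentials. Known uniform stability results for the interval are formulated via spectral data, and that passage breaks down when eigenvalues are multiple. This is exactly why the paper works with $\hat M_k$ on the line.

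\textbf{The missing idea.} Following Yurko, treat $M_k$ itself as a Weyl-type function of the edge $e_k$. On $e_k$ one has $\Phi_{kk}(x,\lambda)=C_k(x,\lambda)+M_k(\lambda)S_k(x,\lambda)$, with an unknown $\lambda$-dependent condition at the internal vertex. The method of spectral mappings on this single edge needs only $S_k$, $\tilde S_k$, $\hat M_k$, and the fact that all poles of $M_k$ lie inside $\gamma$.

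\emph{Poles.} Because of this last requirement, the lower bound on $|\Delta|$ must hold for all $\mathrm{Im}\,\rho\ge b$, not just on the line as in your Step 2.

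\emph{Edge-wise estimate.} The method gives the main equation \eqref{main eq1} and the formula \eqref{qq} for $\hat q_k$. Write $\rho(S_k\tilde S_k)'=\sin 2\rho x+\kappa_k(x,\rho)$ with $\|\kappa_k(x,\cdot)\|_{L_2(\mathbb R+ib)}\le A_R$. Properties of the Fourier transform then give $\|\hat q_k\|_{L_2(0,\pi)}\le A_R\|\hat M_k(\rho^2)\|_{L_2(\mathbb R+ib)}$.

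\emph{Justifying \eqref{qq}.} In general $\hat M_k(\rho^2)$ is only square integrable on the line, so \eqref{qq} is an $L_2$-limit. It is first proved for potentials in $C^{(2)}_0(\Gamma)$, where $\hat M_k=O(\rho^{-2})$. It is then extended by approximation using the direct estimate of Theorem~\ref{th dir}. This argument is what you must supply in place of the black box.

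\textbf{What already works.} With that in hand, your treatment of $e_1$ goes through as in the paper. One has $M_1=\sum_{j=2}^N S_j'/S_j-1/F_N$ with $|F_N|^{-1}\le A_R|\rho|$. The same single-edge estimate applies to $M_1=-C_1/S_1$, whose poles lie inside $\gamma$ because $|S_1(\pi,\lambda)|\ge\frac14 e^{\tau\pi}/|\rho|$.

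\textbf{A smaller point in Step 1.} You need $\|\rho^2\hat S_j(\pi,\rho^2)\|_{L_2(\mathbb R+ib)}\le A_R\|\hat{\mathrm q}\|$, not merely $\rho\hat S_j\in L_2$. The condition $\hat\omega_j=0$ removes the term $\omega_j\cos\rho\pi/\rho^2$. With one power of $\rho$ less, neither the direct estimate for $\hat M_k$ nor the estimate for $\hat M_1$ closes.
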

We will also prove the stability of the direct problem: the value $\|\hat{\mathrm M}\|_{L_2({\mathbb R}+ib)}$ can be estimated by  $\| \hat{\mathrm{q}}\|,$ see Theorem~\ref{th dir}.
From this it follows that the quantity $\|\hat{\mathrm M}\|_{L_2({\mathbb R}+ib)}$ in Theorem~\ref{main th} is well-defined.

The proof of Theorem~\ref{main th} utilizes the approach of the paper~\cite{yurko2005}, based on the method of spectral mappings, see~\cite{freil}.
To apply this approach, it is necessary that the parameter $b$ is sufficiently large, and all poles of the Weyl functions lie inside the integration contour $\gamma = ({\mathbb R} + ib)^2.$ For the sake of completeness, we present a specific dependence of $b$ on $R,$ which was absent in the previous works.
To obtain a formula of this dependence, we studied the stability
of the partial derivatives of the TOKs (transmutation operator kernels) of the classical Sturm--Liouville operator, see Appendix~\ref{App}.

The paper is organized as follows. In Section~\ref{aux}, we introduce solutions on the individual edges along with the characteristic functions and obtain necessary estimates for them. Section~\ref{direct problem} is devoted to the stability of the direct problem. In Section~\ref{IP(k)}, we apply the approach from~\cite{yurko2005} for proving the stability. 
Section~\ref{final} contains formulas for reducing Inverse problem~\ref{main ip} to the problem on a smaller subtree, application of which completes the proof of Theorem~\ref{main th}. In Section~\ref{char}, the inverse problem of recovering from the characteristic functions is introduced, and its uniform stability is obtained, see Theorem~\ref{444}.  
In the Appendix,
we prove the uniform stability of the partial derivatives of TOKs with respect to the potential of the classical Sturm--Liouville operator, see Theorem~\ref{trans stab}.

\section{Designations and auxiliary objects}
\label{aux}

For $j=\overline{1, N},$ let $S_j(x, \lambda)$ and $C_j(x, \lambda)$ be the solutions of the $j$-th equation in~\eqref{eq} under the initial conditions
$$S_j(0, \lambda) = C_j'(0, \lambda) =0, \quad S'_j(0, \lambda) = C_j(0, \lambda) =1. $$
Introduce $\rho := \sqrt{\lambda}$ such that $\arg \rho\in [0, \pi).$
Then, we have $\tau := \mathrm{Im}\, \rho \ge 0.$
Here and below, we denote by $A_{R}$ {\it different} positive constants depending on the parameter $R.$
\begin{lemma} \label{S_j lemma}
For $j=\overline{1, N},$ the following formulas hold:
\begin{equation} \label{S_j}
\begin{split}
S_j(\pi, \lambda) = \frac{\sin \rho \pi}{\rho} - \omega_j \frac{\cos \rho\pi}{\rho^2} +\frac{\kappa_{1j}(\rho)}{\rho^2},
\\[2mm]
S'_j(\pi, \lambda) = \cos \rho \pi + \omega_j \frac{\sin \rho \pi}{\rho} + \frac{\kappa_{2j}(\rho)}{\rho},
\\[2mm]
C_j(\pi, \lambda) =\cos \rho \pi + \omega_j \frac{\sin \rho \pi}{\rho} + \frac{\kappa_{3j}(\rho)}{\rho},
\\[2mm]
C'_j(\pi, \lambda) =-\rho\sin \rho \pi + \omega_j \cos \rho \pi - \kappa_{4j}(\rho),
\end{split}
\end{equation}
where $\kappa_{sj}(\rho) = o(e^{\tau \pi}),$ and for every fixed $\tau \ge 0$ and $s=\overline{1, 4},$ $\kappa_{sj} \in L_2(\mathbb{R}+i\tau).$  Moreover, the following statements hold.
\begin{enumerate}
\item
If $\| q_j\|_{L_2(0, \pi)} \le R,$ then
\begin{equation} \label{kappa_sj}
\begin{split}
&|\kappa_{sj}(\rho)| \le \sqrt\pi R B_R e^{\tau \pi}, \quad B_R := \frac{\sqrt 2}{2} + 12R \pi^{\frac32}\exp(2\pi^{\frac32}R),\\
&\|\kappa_{sj}(\rho)\|_{L_2(\mathbb R + i \tau)} \le A_R e^{\tau \pi}.
\end{split}
\end{equation}
\item If $\| q_j\|_{L_2(0, \pi)} \le R$ and $\| \tilde q_j\|_{L_2(0, \pi)} \le R,$ then
\begin{equation} \label{hat kappa} 
|\hat\kappa_{sj}(\rho)| \le A_R \|\hat q_j \|_{L_2(0,\pi)} e^{\tau \pi}, \quad 
\|\hat \kappa_{sj}(\rho)\|_{L_2(\mathbb R + i \tau)} \le A_R \|\hat q_j \|_{L_2(0,\pi)} e^{\tau \pi}.
\end{equation}
\item
If $q_j\in C^{(2)}[0, \pi],$ then
\begin{equation}\label{smooth kappa}
\kappa_{sj}(\rho) = \left\{
\begin{array}{cc}
\displaystyle\Big(\frac{q_j(\pi) + q_j(0)}{4} + \frac{(-1)^s \omega_j^2}{2}\Big) \frac{\sin \rho \pi}{\rho} + O\Big(\frac{e^{\tau \pi}}{\rho^2}\Big), & s=1, 4.\\[3mm]
\displaystyle\Big((-1)^s\frac{q_j(0) - q_j(\pi)}{4} -\frac{ \omega_j^2}{2}\Big) \frac{\cos \rho \pi}{\rho} + O\Big(\frac{e^{\tau \pi}}{\rho^2}\Big), & s=2, 3.
\end{array}\right.
\end{equation}
\end{enumerate}
\end{lemma}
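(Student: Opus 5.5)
We will use the transmutation operator representation of $S_j$ and $C_j$. There exist kernels $K_j(x,t)$ and $G_j(x,t)$, continuous in the triangle $0\le |t|\le x\le\pi$, such that
$$S_j(x,\lambda)=\frac{\sin\rho x}{\rho}+\int_0^x K_j(x,t)\frac{\sin\rho t}{\rho}\,dt,\qquad C_j(x,\lambda)=\cos\rho x+\int_0^x G_j(x,t)\cos\rho t\,dt.$$
They satisfy $K_j(x,x)=G_j(x,x)=\frac12\int_0^x q_j$, so at $x=\pi$ the diagonal value is $\omega_j$. Differentiating in $x$ gives analogous formulas for $S_j'$ and $C_j'$ with the kernels $\partial_xK_j$ and $\partial_xG_j$.

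\textbf{Derivation of the asymptotic formulas.} In $S_j(\pi,\lambda)$ we integrate by parts once in $t$. The boundary term at $t=\pi$ gives $-\omega_j\cos\rho\pi/\rho^2$, and the term at $t=0$ vanishes because $K_j(x,0)=0$. What remains is $\rho^{-2}\int_0^\pi \partial_tK_j(\pi,t)\cos\rho t\,dt$, so we set $\kappa_{1j}(\rho)=\int_0^\pi \partial_tK_j(\pi,t)\cos\rho t\,dt$. The other three functions need no integration by parts.
\begin{itemize}
\item For $S_j'(\pi)$, differentiating gives $\cos\rho\pi+K_j(\pi,\pi)\sin\rho\pi/\rho+\rho^{-1}\int_0^\pi\partial_xK_j(\pi,t)\sin\rho t\,dt$, which defines $\kappa_{2j}$.
\item For $C_j(\pi)$, we write $\int_0^\pi G_j\cos\rho t\,dt=\omega_j\sin\rho\pi/\rho-\rho^{-1}\int_0^\pi\partial_tG_j\sin\rho t\,dt$, which defines $\kappa_{3j}$.
\item For $C_j'(\pi)$, the same steps give $\kappa_{4j}(\rho)=-\int_0^\pi\partial_xG_j(\pi,t)\cos\rho t\,dt$.
\end{itemize}
Each $\kappa_{sj}$ is therefore a Fourier-type integral $\int_0^\pi f(t)\{\cos,\sin\}\rho t\,dt$ with $f\in L_2(0,\pi)$. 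Consequently:
\begin{itemize}
\item the Riemann--Lebesgue lemma gives $o(e^{\tau\pi})$;
\item Cauchy--Schwarz gives $|\kappa_{sj}|\le\sqrt{\pi/2}\,e^{\tau\pi}\|f\|$ or a similar bound;
\item Plancherel on the line $\mathbb R+i\tau$, after writing $\cos\rho t=(e^{i\rho t}+e^{-i\rho t})/2$ and using $|e^{\pm i\rho t}|\le e^{\tau t}$, gives $\|\kappa_{sj}\|_{L_2(\mathbb R+i\tau)}\le \sqrt{\pi}\,e^{\tau\pi}\|f\|_{L_2}$.
\end{itemize}
The estimates \eqref{kappa_sj} and \eqref{hat kappa} then reduce to bounding $\|f\|$ and $\|\hat f\|$ by $R$ and by $\|\hat q_j\|$. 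The $\hat\kappa$ bounds follow by linearity, since $\hat\kappa_{sj}$ has kernel $\hat f$.

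\textbf{Kernel estimates (main step).} We need $\|\partial_tK_j(\pi,\cdot)\|$, $\|\partial_xK_j(\pi,\cdot)\|$ and the corresponding norms for $G_j$ to be at most $\sqrt{\pi}R B_R/\sqrt{\pi/2}$-type constants, together with a Lipschitz dependence on $q_j$. I would use the Appendix (Theorem~\ref{trans stab}), which asserts exactly the uniform stability of the partial derivatives of the TOKs. If an explicit constant is required, I would argue as follows. The Gelfand--Levitan-type integral equation for $K$ in characteristic coordinates, solved by successive approximations, gives $|K(x,t)|\le \frac12\sqrt{\pi}R\exp(\cdot)$. Differentiating the equation gives $\partial K$ equal to $\frac14 q((x\pm t)/2)$ plus integral terms bounded by $R^2\pi^{3/2}\exp(2\pi^{3/2}R)$-type quantities. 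This yields a bound of the form $\sqrt\pi R B_R$, where the $\frac{\sqrt2}{2}$ part comes from the leading $q$ term and the $12R\pi^{3/2}\exp(\cdot)$ part from the remainder. For the differences, we subtract the two integral equations and apply a Gronwall-type estimate to get $\|\hat{\partial K}\|\le A_R\|\hat q_j\|$. Tracking the explicit constant $B_R$ is the main obstacle; the structure of the argument itself is standard.

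\textbf{Smooth case.} For $q_j\in C^{(2)}$, the kernels are $C^1$ up to the boundary, so we integrate by parts once more in each $\kappa_{sj}$. The boundary values follow from the known formulas for the derivatives on the diagonal and at $t=0$ in terms of $q_j(\pi)$, $q_j(0)$ and $\omega_j^2$. For example, $\partial_tK(\pi,\pi)+\partial_xK(\pi,\pi)=\frac12 q(\pi)$, and $\partial_xK(\pi,\pi)-\partial_tK(\pi,\pi)$ is obtained from the Goursat problem, which produces the $\omega_j^2/2$ terms. The remaining integrals contribute $O(e^{\tau\pi}/\rho^2)$, which gives \eqref{smooth kappa}.
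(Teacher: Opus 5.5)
Your derivation of \eqref{S_j} and your treatment of parts 1--2 follow the paper's proof. However, the smooth case (part 3) has a genuine gap: one further integration by parts does not give an $O(e^{\tau\pi}/\rho^2)$ remainder. In the paper's notation ($P$ is your $K_j$, $K$ is your $G_j$), $\kappa_{1j}(\rho)=\dot P(\pi,\pi)\frac{\sin\rho\pi}{\rho}-\frac1\rho\int_0^\pi \partial_t^2P(\pi,t)\sin\rho t\,dt$. After this single step you only know the last term is $o(e^{\tau\pi}/\rho)$, not $O(e^{\tau\pi}/\rho^2)$, and the same happens for $s=2,3,4$.

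You have to integrate by parts twice. This requires $\dot P(\pi,\cdot)$, $P'(\pi,\cdot)$, $\dot K(\pi,\cdot)$, $K'(\pi,\cdot)$ to lie in $C^{(2)}[0,\pi]$; with merely $C^1$ kernels, even your first integration by parts is not justified. This is what the paper does. For $q_j\in C^{(2)}[0,\pi]$ the TOKs are one order smoother than the potential, so e.g.\ $\dot P(\pi,\cdot)\in C^{(2)}[0,\pi]$. After two integrations by parts every new term is $O(e^{\tau\pi}/\rho^2)$. The $t=0$ boundary terms of the first step vanish, because $\sin 0=0$ or $P'(x,0)=\dot K(x,0)=0$. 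The leading coefficients $\dot P(\pi,\pi)$, $P'(\pi,\pi)$, $\dot K(\pi,\pi)$, $K'(\pi,\pi)$ are then read off from Proposition~\ref{K'(x, x)}, with no need to revisit the Goursat problem.

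Two smaller points. First, the Cauchy--Schwarz constant is $\sqrt\pi$, not $\sqrt{\pi/2}$: use $|\cos\rho t|,|\sin\rho t|\le e^{\tau t}$ and $\|e^{\tau t}\|_{L_2(0,\pi)}\le\sqrt\pi\,e^{\tau\pi}$ (your constant already fails for $\rho=0$, $f\equiv1$). With $\sqrt\pi$, what you need is exactly $\|f\|_{L_2(0,\pi)}\le B_R R$ for each of the four kernels $f$. The paper obtains this by applying Theorem~\ref{trans stab} with $\tilde q=0$, whose kernels all vanish, so $\hat{\dot P}=\dot P$, etc. Your sketched re-derivation of $B_R$ is therefore unnecessary.

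Second, if you actually compute the $s=4$ coefficient, you will find $\kappa_{4j}(\rho)=-K'(\pi,\pi)\frac{\sin\rho\pi}{\rho}+O(e^{\tau\pi}/\rho^2)=-\big(\frac{q_j(\pi)+q_j(0)}{4}+\frac{\omega_j^2}{2}\big)\frac{\sin\rho\pi}{\rho}+O(e^{\tau\pi}/\rho^2)$. This is the opposite sign to the one displayed in \eqref{smooth kappa}, and a direct check with $q_j\equiv c$ confirms it. The slip in the statement is harmless for Lemma~\ref{G_delta}: there $q_j(0)=q_j(\pi)=0$ and $\hat\omega_j=0$, so the leading terms cancel in $\hat\kappa_{sj}$ whatever their sign.
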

\begin{proof}
Assume $s=1$ for definiteness, the other cases are treated similarly. Formu\-la~\eqref{0}  in Appendix~\ref{App} with $q=q_j$ yields~\eqref{S_j}, where
\begin{equation} \label{kappa1}
\kappa_{1j}(\rho) = \int_0^\pi \dot P(\pi, t) \cos \rho t\, dt.
\end{equation}
By the Riemann--Lebesgue lemma, $\kappa_{1j}(\rho) = o(e^{\tau \pi}).$
Applying Theorem~\ref{trans stab} with $\tilde q = 0,$ we obtain $\| \dot P(\pi, \cdot)\|_{L_2(0, \pi)} \le B_R R.$ Estimating the right-hand side of~\eqref{kappa1} with the Cauchy--Bunyakovsky inequality, we arrive at the first estimate in~\eqref{kappa_sj}. The second estimate in~\eqref{kappa_sj} follows from the fact that~\eqref{kappa1} can be represented as a linear combination of the Fourier transforms.
Applying Theorem~\ref{trans stab} with $q=q_j$ and $\tilde q = \tilde q_j,$ one can prove~\eqref{hat kappa} similarly to~\eqref{kappa_sj}.

If $q_j\in C^{(2)}[0, \pi],$ then $ \dot P(\pi, t) \in C^{(2)}[0, \pi],$  see~\cite{march, freil}. Integrating by parts twice in~\eqref{kappa1}, we get
$$\kappa_{1j}(\rho) = \dot P(\pi, \pi)\frac{\sin \rho \pi}{\rho} + O\Big(\frac{ e^{\tau \pi}}{\rho^2}\Big).$$ Taking into account the formulas of Proposition~\ref{K'(x, x)}, we arrive at~\eqref{smooth kappa}.
\end{proof}

Consider the representations $\Phi_{jk}(x, \lambda) = \alpha_{jk}(\lambda) S_{j}(x, \lambda) + \beta_{jk}(\lambda) C_j(x, \lambda),$ where $j=\overline{1, N}$ and $k=\overline{2, N}.$  Substituting them into conditions~\eqref{cont} and \eqref{Weyl bc}, for each $k,$ we arrive at a system of linear equations with respect to $\{ \alpha_{jk},\beta_{jk}\}_{j=1}^N,$ which yields 
\begin{equation} \label{M_k}
M_k(\lambda) = \alpha_{kk}(\lambda) = -\frac{\Delta_k(\lambda)}{\Delta(\lambda)}, \quad k=\overline{2, N},
\end{equation}
where
\begin{equation} \label{Delta}
\begin{split}
\Delta(\lambda) = C_1(\pi, \lambda)\prod_{l=2}^N S_l(\pi, \lambda) + \sum_{j=2}^N S'_j(\pi, \lambda)\prod_{l\ne j} S_l(\pi, \lambda), \\
\Delta_k(\lambda) = C_k(\pi, \lambda)\Big(C_1(\pi, \lambda)\prod_{l\ne 1,k} S_l(\pi, \lambda) + \sum_{j\ne1,k}S'_j(\pi, \lambda) \prod_{l\ne j,k}S_l(\pi, \lambda)\Big) \\
+ C_k'(\pi, \lambda)\prod_{l\ne k} S_l(\pi, \lambda).
\end{split}
\end{equation}
The function $\Delta(\lambda)$ is the characteristic function of the boundary value problem for~\eqref{eq}, \eqref{cont} with the Dirichlet conditions
$$y_1(\pi) = 0, \quad y_j(0) = 0, \; j = \overline{2, N}.$$
For $k=\overline{2, N},$ the function $\Delta_k(\lambda)$ is the characteristic function of the boundary value problem for~\eqref{eq}, \eqref{cont} with the Dirichlet--Neumann conditions
$$y_1(\pi) = 0, \quad y_j(0) = 0, \; j = \overline{2, N}\setminus\{k\}, \quad y'_k(0) = 0.$$
The functions $\Delta(\lambda)$ and $\Delta_k(\lambda)$ are entire in $\lambda.$
By virtue of formula~\eqref{M_k}, the function $M_k(\lambda)$ is meromorphic, and its poles are zeros of $\Delta(\lambda).$

\begin{lemma} \label{b lemma}
Let $\| \mathrm q\| \le R,$ and let
$b$ be defined in~\eqref{b}.
Then, for $\tau := {\mathrm Im}\,\rho \ge b$ the following inequalities hold:
\begin{equation} \label{S_j ineq}
|S_j(\pi, \lambda)| \ge \frac14\frac{e^{\tau \pi}}{|\rho|}, \quad j=\overline{1, N},
\end{equation}
\begin{equation} \label{Delta ineq}
|\Delta(\lambda)| \ge \frac{N}{4} \Big(\frac{3}{8}\Big)^{N-1}\frac{e^{\tau N \pi}}{ |\rho|^{N-1}}.
\end{equation}
\end{lemma}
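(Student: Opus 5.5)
We use the asymptotics of Lemma~\ref{S_j lemma}, estimate every remainder by $B_R$ through~\eqref{kappa_sj}, and then rely on the choice of $b$ in~\eqref{b} to make the leading terms dominate. Two elementary facts hold for $\tau = \mathrm{Im}\,\rho \ge b \ge \frac{\ln 4}{2\pi}$. First, $e^{-2\tau\pi}\le \frac14$. Second, $|\rho| \ge \tau \ge b$. From the first fact,
\[
|\sin\rho\pi| \ge \tfrac12\big(e^{\tau\pi}-e^{-\tau\pi}\big) \ge \tfrac38 e^{\tau\pi},
\qquad |\cos\rho\pi| \le \tfrac12\big(e^{\tau\pi}+e^{-\tau\pi}\big) \le \tfrac58 e^{\tau\pi}.
\]
We also bound $|\omega_j| \le \frac12\sqrt{\pi}\,\|q_j\|_{L_2(0,\pi)} \le \frac{\sqrt\pi}{2} R$ by Cauchy--Bunyakovsky, and $|\kappa_{sj}(\rho)| \le \sqrt\pi R B_R e^{\tau\pi}$ by~\eqref{kappa_sj}.

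For~\eqref{S_j ineq}, the first formula of~\eqref{S_j} gives
\[
|S_j(\pi,\lambda)| \ge \frac{e^{\tau\pi}}{|\rho|}\Big(\frac38 - \frac{\frac58|\omega_j| + \sqrt\pi R B_R}{|\rho|}\Big).
\]
It then suffices to check that $\big(\frac58\cdot\frac{\sqrt\pi}{2}R + \sqrt\pi R B_R\big)/b \le \frac18$. Writing out $B_R$, the numerator is at most $R(\sqrt\pi + \sqrt{2\pi} + 24\pi^2 R\exp(2\pi^{3/2}R))/4$ up to harmless constants. The denominator in~\eqref{b} satisfies $\sqrt[N]{8^N+3^{N-1}}-8 \le \frac{3^{N-1}}{N\,8^{N-1}} \le 1$. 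Hence the second entry of~\eqref{b} is at least $4R(\cdots)$, which comfortably gives the $\frac18$ bound. We do the same for $S'_j$ and $C_1$, obtaining upper and lower bounds of the form $|S'_j(\pi,\lambda) - \cos\rho\pi| \le \varepsilon e^{\tau\pi}$ with $\varepsilon := R(\sqrt\pi+\sqrt{2\pi}+24\pi^2R\exp(2\pi^{3/2}R))/b$, and similarly for $C_1(\pi,\lambda)$. The same substitution into $S_j$ gives $\big|\rho S_j(\pi,\lambda) - \sin\rho\pi\big| \le \varepsilon e^{\tau\pi}$ (after absorbing the $\omega_j$ term).

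For~\eqref{Delta ineq}, we divide $\Delta$ by $\prod_{l=2}^N S_l(\pi,\lambda)$, which is nonzero by the first step:
\[
\frac{\Delta(\lambda)}{\prod_{l\ge2} S_l(\pi,\lambda)} = C_1(\pi,\lambda) + \sum_{j=2}^N S_1(\pi,\lambda)\,\frac{S'_j(\pi,\lambda)}{S_j(\pi,\lambda)}.
\]
For the unperturbed case $q\equiv 0$ this equals $\cos\rho\pi\,(1+(N-1)\sin\rho\pi/\sin\rho\pi) = N\cos\rho\pi$, but $\cos\rho\pi$ need not be large, so this route is unsuitable. Instead we compare directly: $\Delta$ is a sum of $N$ products of $N$ factors. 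Each product has the form $\rho^{-(N-1)}\prod(\text{main term}+\text{error})$, with main terms $\cos\rho\pi$ or $\sin\rho\pi$ and errors of size at most $\varepsilon e^{\tau\pi}$. The leading part is $\rho^{-(N-1)}N\cos\rho\pi\,\sin^{N-1}\!\rho\pi$, since the $\omega_j$ terms cancel at the leading order, which can be verified. For $\tau\ge b$, $\cos\rho\pi = \frac12 e^{-i\rho\pi}(1+e^{2i\rho\pi})$, so $|\cos\rho\pi| \ge \frac38 e^{\tau\pi}$. The leading part therefore has modulus at least $N(\frac38)^N e^{N\tau\pi}/|\rho|^{N-1}$.

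The remainder is a difference of products. We expand $\prod(a_i+\delta_i)-\prod a_i$ with $|a_i|\le e^{\tau\pi}$ and $|\delta_i|\le \varepsilon e^{\tau\pi}$ (upper bounds $|\sin|,|\cos|\le \frac58 e^{\tau\pi}<e^{\tau\pi}$). This bounds the remainder by $N\big((1+\varepsilon)^N - 1\big)e^{N\tau\pi}/|\rho|^{N-1}$, or more sharply with the constants $\frac58$. The main obstacle is bookkeeping the constants so that the resulting condition is exactly the one encoded in~\eqref{b}. Requiring the remainder to be at most half the leading bound yields a condition of the shape $(8+\varepsilon')^N \le 8^N + 3^{N-1}$, i.e.\ $\varepsilon' \le \sqrt[N]{8^N+3^{N-1}}-8$, which matches the denominator in~\eqref{b}. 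I would first scale every factor by $8/e^{\tau\pi}$ so that the main terms are bounded by $8$ in modulus and the errors by $\varepsilon' = 4R(\cdots)/b$. I would then tune the leftover constant ($\frac{N}{4}(\frac38)^{N-1}$ rather than $N(\frac38)^N/2$) by using $|\cos\rho\pi|\ge\frac38 e^{\tau\pi}$ only once.
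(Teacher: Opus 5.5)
Your proposal is essentially the paper's own proof. Absorb the $\omega_j$ and $\kappa_{sj}$ contributions into errors so that $|\rho S_j(\pi,\lambda)|\ge\frac38e^{\tau\pi}-L_Re^{\tau\pi}/\tau$. Then expand the $N$ products in $\Delta$ around $N\cos\rho\pi\sin^{N-1}\rho\pi/\rho^{N-1}$ and impose $(1+L_R/\tau)^N-1\le 3^{N-1}/8^N$.

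Three details in your sketch need correcting, though none breaks the argument:
\begin{enumerate}
\item The error constant is $L_R=R\sqrt\pi(\frac12+B_R)$, so $8L_R=4R(\sqrt\pi+\sqrt{2\pi}+24\pi^2R\exp(2\pi^{3/2}R))$. Your $\varepsilon$ must therefore be $L_R/b$, which is half of what you wrote, for $\varepsilon'=8\varepsilon$ to be bounded by $\sqrt[N]{8^N+3^{N-1}}-8$. With your $\varepsilon$ taken literally, the condition would fail by a factor of $2$.
\item The $\omega_j$ terms do not cancel at first order. They contribute $\rho^{-N}\sum_{j=1}^N\omega_j\,(\sin^N\rho\pi-(N-1)\cos^2\rho\pi\sin^{N-2}\rho\pi)$. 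This is harmless only because you had already placed them in the errors, so that remark should simply be dropped.
\item The condition $(8+\varepsilon')^N\le 8^N+3^{N-1}$ makes the remainder at most one third, not one half, of $N(\frac38)^Ne^{N\tau\pi}/|\rho|^{N-1}$. This yields $\frac23N(\frac38)^N=\frac N4(\frac38)^{N-1}$ directly, with no further tuning of how $|\cos\rho\pi|\ge\frac38e^{\tau\pi}$ is used.
\end{enumerate}
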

\begin{proof}
I.  It follows from~\eqref{S_j} and~\eqref{kappa_sj} that
\begin{equation} \label{L_R}
S_j(\pi, \lambda) = \frac{\sin \rho \pi}{\rho} + \frac{s_j(\rho)}{\rho^2}, \quad |s_j(\rho)|\le e^{\tau \pi} L_R, \quad L_R := R \sqrt{\pi}\Big(\frac12 + B_R\Big).
\end{equation}
For $\tau \ge \ln 4/(2 \pi),$ we have $e^{-2\tau \pi} \le \frac14$ and
$$|\sin \rho \pi| \ge \frac12(e^{\tau \pi} - e^{-\tau \pi}) = \frac{e^{\tau \pi}}{2}(1 - e^{-2\tau \pi}) \ge \frac{3}{8}e^{\tau \pi}.$$
For $\tau \ge 8 L_R,$ we obtain
$$|\rho S_j(\pi, \lambda)| \ge |\sin \rho \pi| - \frac{|s_j(\rho)|}{|\rho|} \ge \frac{3}{8}e^{\tau \pi} - \frac{e^{\tau \pi}L_R}{\tau} \ge \frac14e^{\tau \pi},$$
and~\eqref{S_j ineq} is proved. It remains only to note that $8 L_R \le 8L_R /(\sqrt[N]{8^N + 3^{N-1}}-8) \le b.$

II. For $j=\overline{1, N},$ denote
\begin{equation} \label{ss}
\rho S_j(\pi, \rho)= \sin \rho \pi + a_j(\rho), \quad 
S'_j(\pi, \rho) = \cos \rho \pi + b_j(\rho), \quad
C_j(\pi, \lambda) = \cos \rho \pi + c_j(\rho).
\end{equation}
From~\eqref{L_R} it is clear that
$|a_j(\rho)| \le e^{\tau \pi}L_R/\tau.$
The same inequalities are valid for $|b_j(\rho)|$ and $|c_j(\rho)|.$ Substituting~\eqref{ss} into~\eqref{Delta}, after manipulations, we obtain
\begin{equation} \label{Delta N}
\begin{split}
\Big|\Delta(\lambda) - N \frac{\cos \rho \pi \sin^{N-1} \rho\pi}{\rho^{N-1}}\Big| \le
\frac{N}{|\rho|^{N-1}} \sum_{m=1}^N {N\choose m} \left(\frac{L_R}{\tau}e^{\tau \pi}\right)^m (e^{\tau \pi})^{N-m}\\
= \frac{N}{|\rho|^{N-1}} e^{\tau N \pi} \left( \Big( 1 + \frac{L_R}{\tau}\Big)^N - 1\right).
\end{split}
\end{equation}
It follows from the previous point that for $\tau \ge \ln 4/(2 \pi)$
\begin{equation} \label{Delta est}
\Big|N\frac{\cos \rho \pi \sin^{N-1} \rho\pi}{\rho^{N-1}}\Big| \ge \frac{N}{|\rho|^{N-1}}\frac{3^N}{8^N} e^{\tau N\pi}.
\end{equation}
Moreover, for
$\tau \ge 8L_R /(\sqrt[N]{8^N + 3^{N-1}}-8),$
we have
\begin{equation} \label{NN}
\Big( 1 + \frac{L_R}{\tau}\Big)^N - 1 \le \frac{3^{N-1}}{8^N}.
\end{equation}
Applying~\eqref{Delta N}--\eqref{NN}, we arrive at estimate~\eqref{Delta ineq} for $\tau \ge b = \max \Big\{ \frac{\ln 4}{2 \pi}, \frac{8L_R}{\sqrt[N]{8^N + 3^{N-1}}-8} \Big\}:$
\begin{equation*}
\begin{split}
|\Delta(\lambda)| \ge \frac{N}{|\rho|^{N-1}}\frac{3^N}{8^N} e^{\tau N\pi} - \frac{N}{|\rho|^{N-1}} \left( \Big( 1 + \frac{L_R}{\tau}\Big)^N - 1\right) e^{\tau N\pi} \\ \ge\frac23 \frac{N}{|\rho|^{N-1}} \frac{3^N}{8^N} e^{\tau N\pi} =
\frac{N}{4|\rho|^{N-1}} \Big(\frac{3}{8}\Big)^{N-1} e^{\tau N \pi}.
\end{split}
\end{equation*}
\end{proof}

\section{Stability of the direct problem} \label{direct problem}

In this section, we study the dependence of the Weyl vector $\mathrm{M}(\lambda)$
on the potential ${\mathrm q}$ and prove that this dependence is Lipschitz continuous. From here on, we assume that $\mathrm q$ and $\tilde {\mathrm q}$ satisfy the conditions of Theorem~\ref{main th}.

\begin{theorem}[stability of the direct problem] \label{th dir}
Under the conditions of Theorem~\ref{main th}, the inequality
$$\|\mathrm{\hat M}\|_{L_2({\mathbb R}+ib)} \le A_{R} \| \mathrm{\hat q}\|$$
holds. 
\end{theorem}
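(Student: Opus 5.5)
We work on the line $\rho\in\mathbb R+ib$ and write $\hat M_k = -\hat\Delta_k/\Delta + \tilde\Delta_k\hat\Delta/(\Delta\tilde\Delta)$, which follows from \eqref{M_k}. It then suffices to prove
\begin{equation*}
\|\hat\Delta_k/\Delta\|_{L_2(\mathbb R+ib)}\le A_R\|\hat{\mathrm q}\|,\qquad \|\tilde\Delta_k\hat\Delta/(\Delta\tilde\Delta)\|_{L_2(\mathbb R+ib)}\le A_R\|\hat{\mathrm q}\|.
\end{equation*}
The lower bound \eqref{Delta ineq} of Lemma~\ref{b lemma} applies to both $\Delta$ and $\tilde\Delta$, since $\|\mathrm q\|,\|\tilde{\mathrm q}\|<R$ and $\tau=b$. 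It gives $|\Delta|^{-1},|\tilde\Delta|^{-1}\le A_R|\rho|^{N-1}e^{-bN\pi}$, with a constant depending on $N$, which we regard as fixed together with $R$.

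First step: substitute the representations \eqref{S_j} into \eqref{Delta}. Each of $S_l(\pi,\lambda)$, $S_l'(\pi,\lambda)$, $C_l(\pi,\lambda)$, $C_l'(\pi,\lambda)$ equals a main part, which depends on $\mathrm q$ only through $\omega_l$, plus a remainder $\kappa_{sl}$ with extra decay. By \eqref{omega eq} we have $\hat\omega_l=0$, so the main parts cancel in every hatted quantity:
\begin{equation*}
\hat S_l(\pi,\lambda)=\hat\kappa_{1l}/\rho^2,\quad \hat S_l'(\pi,\lambda)=\hat\kappa_{2l}/\rho,\quad \hat C_l(\pi,\lambda)=\hat\kappa_{3l}/\rho,\quad \hat C_l'(\pi,\lambda)=-\hat\kappa_{4l}.
\end{equation*}
Write $\hat\Delta$ and $\hat\Delta_k$ as telescoping sums, $\widehat{\prod f_l}=\sum_m \big(\prod_{l<m}f_l\big)\hat f_m\big(\prod_{l>m}\tilde f_l\big)$. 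Each term then contains exactly one hatted factor, and all other factors are bounded pointwise on $\mathbb R+ib$ using \eqref{kappa_sj}:
\begin{equation*}
|S_l|\le A_R e^{b\pi}/|\rho|,\qquad |S_l'|,\ |C_l|\le A_Re^{b\pi},\qquad |C_l'|\le A_R|\rho|e^{b\pi}.
\end{equation*}
These bounds hold for $|\rho|\ge b$, and on this line $|\rho|\ge b$ always.

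Power counting gives $|\hat\Delta|\le A_R e^{bN\pi}|\rho|^{-N}\sum|\hat\kappa_{sl}|$. In the worst case one factor $S_l'$ or $C_1$ gets hatted, giving $|\rho|^{-(N-1)}\cdot|\rho|^{-1}|\hat\kappa|$. The same count gives $|\hat\Delta_k|\le A_Re^{bN\pi}|\rho|^{-(N-1)}\sum|\hat\kappa_{sl}|$: here the term with the hatted factor $C_k'$ has $N-1$ factors $S_l$ and no extra decay. Dividing by $|\Delta|$ yields
\begin{equation*}
|\hat\Delta_k/\Delta|\le A_R\sum_{s,l}|\hat\kappa_{sl}(\rho)|,
\end{equation*}
and the $L_2$ bound in \eqref{hat kappa} gives $\|\hat\Delta_k/\Delta\|_{L_2(\mathbb R+ib)}\le A_R\|\hat{\mathrm q}\|$.

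Second term: the same counting gives $|\tilde\Delta_k|\le A_R|\rho|e^{bN\pi}/|\rho|^{N-1}$, so $|\tilde\Delta_k/\tilde\Delta|\le A_R|\rho|$. Combined with $|\hat\Delta/\Delta|\le A_R|\rho|^{-1}\sum|\hat\kappa|$, the factor $|\rho|$ cancels and we again get a pointwise bound $A_R\sum|\hat\kappa_{sl}|$, hence an $L_2$ bound by \eqref{hat kappa}.

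The main obstacle is the bookkeeping of powers of $\rho$. Every term of $\hat\Delta$ must carry the full decay $|\rho|^{-N}$; this holds only because $\hat\omega_l=0$ removes the $O(1)$ and $O(\rho^{-1})$ parts of the differences. It is not automatic for terms that mix $\tilde C_1$ with hatted $S$'s, and each such term has to be checked. If any term were short by a factor of $\rho$, one would instead use that $|\rho|\ge b>0$ bounds $|\rho|^{-1}$, and keep the $\hat\kappa$ factor to retain $L_2$-integrability. Exactly one factor must stay in $L_2$ while all the others are estimated pointwise by the $\sup$ bounds in \eqref{kappa_sj} and \eqref{hat kappa}.
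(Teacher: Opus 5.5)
Your proposal is correct and follows the paper's proof essentially step for step. The paper uses the same splitting of $\hat M_k$ and the lower bound \eqref{Delta ineq} for $\Delta,\tilde\Delta$, bounds the unhatted factors pointwise ($|\rho^{N-2}\Delta_k|\le A_R$), and obtains $\|\rho^{N}\hat\Delta\|,\|\rho^{N-1}\hat\Delta_k\|\le A_R\|\hat{\mathrm q}\|$ from $\hat\omega_j=0$ and \eqref{hat kappa}. The power count you flagged does hold for every term; for example, $\tilde C_1$ times a hatted $S_l$ gives $O(1)\cdot|\rho|^{-2}|\hat\kappa_{1l}|\cdot|\rho|^{-(N-2)}$. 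This matters, because your proposed fallback of using $|\rho|\ge b$ could not rescue a term short by a factor of $\rho$.
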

 \begin{proof}
 I. Let $\mathrm{Im}\,\rho = b.$ From formulas~\eqref{S_j} and~\eqref{kappa_sj}, it follows that for $j=\overline{1, N}$ 
\begin{equation} \label{CS} 
\begin{split} 
|S_j(\pi, \rho^2)| \le \frac{A_R}{|\rho |}, \quad |S'_j(\pi, \rho^2)| \le A_R, \\[2mm] 
|C_j(\pi, \rho^2)| \le A_R, \quad |C'_j(\pi, \rho^2)| \le A_R |\rho|. 
\end{split}
\end{equation}
Estimating each term in the formula for $\Delta_k$ in~\eqref{Delta},  we easily obtain
\begin{equation}|\rho^{N-2}\Delta_k(\rho^2)| \le A_R, \quad k=\overline{2, N}.
\label{Delta_k est}
\end{equation}
Note that all estimates in this point are also valid for the objects related to $\tilde{\mathrm q}.$

II. Applying formulas~\eqref{S_j} and~\eqref{hat kappa}, by~\eqref{omega eq}, for $j=\overline{1, N}$ we have
\begin{equation} \label{kappa CS}
\begin{split}
\|\rho^2 \hat S_j(\pi, \rho^2)\|_{L_2(\mathbb R + ib)} \le A_R \| \mathrm{\hat q}\|, \quad \|\rho \hat S'_j(\pi, \rho^2)\|_{L_2(\mathbb R + ib)} \le A_R \| \mathrm{\hat q}\|, \\[2mm] 
\|\rho \hat C_j(\pi, \rho^2)\|_{L_2(\mathbb R + ib)} \le A_R \| \mathrm{\hat q}\|, \quad \|\hat C'_j(\pi, \rho^2)\|_{L_2(\mathbb R + ib)} \le A_R \| \mathrm{\hat q}\|. 
\end{split} 
\end{equation} 
Applying~\eqref{Delta} and~\eqref{CS}, we get 
$$|\hat \Delta(\rho^2)| \le \frac{A_R}{|\rho|^{N-1}}|\hat C_1(\pi, \rho^2)| + \frac{A_R}{|\rho|^{N-2}} \sum_{j=2}^N |\hat S_j(\pi, \rho^2)| + \frac{A_R}{|\rho|^{N-1}} \sum_{j=2}^N |\hat S'_j(\pi, \rho^2)|.$$
By~\eqref{kappa CS}, this implies
\begin{equation*}
\| \rho^{N}\hat \Delta(\rho^2)\|_{L_2(\mathbb R + ib)} \le A_R \| \mathrm{\hat q}\|.
\end{equation*}
Similarly,
\begin{equation*}
\| \rho^{N-1}\hat \Delta_k(\rho^2)\|_{L_2(\mathbb R + ib)} \le A_R \| \mathrm{\hat q}\|, \quad k=\overline{2, N}. 
\end{equation*}

III.  By formula~\eqref{M_k}, we have
\begin{equation*}
\begin{split}
\big|\hat M_k(\rho^2)\big| \le \left| \frac{\hat \Delta_k(\rho^2)}{\Delta(\rho^2)}\right| +\left| \frac{\Delta_k(\rho^2)\hat \Delta(\rho^2)}{\Delta(\rho^2)\tilde\Delta(\rho^2)}\right| \\
\overset{\eqref{Delta ineq}}{\le} A_R \big|\rho^{N-1}\hat\Delta_k(\rho^2)\big|
+A_R \big|\rho^{N-2}\Delta_k(\rho^2)\big|\big|\rho^{N}\hat\Delta(\rho^2)\big|.
\end{split}
\end{equation*}
Taking into account the inequalities obtained in points I and II, we arrive at the estimates
$$\big\|\hat M_k(\rho^2)\big\|_{L_2({\mathbb R}+ ib)} \le A_R \|\hat{\mathrm q}\|, \quad k=\overline{2, N}, $$
which are equivalent to the statement of the theorem.
 \end{proof}
  Note that the estimate obtained in Theorem~\ref{th dir} is the opposite of the estimate in Theorem~\ref{main th}, which we want to prove. In its following proof, we will apply the result of Theorem~\ref{th dir}.
We will also need the following lemma.
\begin{lemma} \label{G_delta}
Introduce a class of potentials on $\Gamma$
\begin{equation*}
C^{(2)}_0(\Gamma) := \Big\{ {\mathrm q} = [q_j]_{j=1}^N \colon q_j \in C^{(2)}[0, \pi], \; q_j(0) = q_j(\pi) =0,\ j=\overline{1, N}\Big\}.
\end{equation*}
Let the potentials $\mathrm q, \tilde{\mathrm q} \in C^{(2)}_0(\Gamma)$ satisfy conditions~\eqref{omega eq} and~\eqref{q bounded}.
Then,
$$\hat M_k(\rho^2) = O\left(\frac{1}{\rho^2}\right), \quad \rho \in {\mathbb R}+i b,
\quad k=\overline{2, N}.$$
\end{lemma}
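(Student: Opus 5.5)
We use the refined asymptotics~\eqref{smooth kappa} of Lemma~\ref{S_j lemma}. For $\mathrm q\in C^{(2)}_0(\Gamma)$ we have $q_j(0)=q_j(\pi)=0$, so the leading coefficients in~\eqref{smooth kappa} depend only on $\omega_j^2$. Since $\hat\omega_j=0$ by~\eqref{omega eq}, these coefficients coincide for $\mathrm q$ and $\tilde{\mathrm q}$. Subtracting the formulas~\eqref{S_j} for the two potentials, we obtain on the line $\mathrm{Im}\,\rho=b$:
\begin{equation*}
\hat S_j(\pi,\rho^2)=O\Big(\frac{1}{\rho^4}\Big),\quad \hat S'_j(\pi,\rho^2)=O\Big(\frac{1}{\rho^3}\Big),\quad \hat C_j(\pi,\rho^2)=O\Big(\frac{1}{\rho^3}\Big),\quad \hat C'_j(\pi,\rho^2)=O\Big(\frac{1}{\rho^2}\Big).
\end{equation*}
Each improves by a factor $\rho^{-2}$ the pointwise bound obtained from~\eqref{hat kappa}.

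Next we substitute these bounds into~\eqref{Delta}, exactly as in point II of the proof of Theorem~\ref{th dir}. There the pointwise bounds~\eqref{CS} were combined with the differences through the telescoping identity $\prod a_l-\prod\tilde a_l=\sum(\ldots)\hat a_j(\ldots)$. Each product in $\Delta$ has order $|\rho|^{-(N-1)}$, and replacing one factor by its difference gains $\rho^{-3}$ relative to that factor. This gives
\begin{equation*}
\hat\Delta(\rho^2)=O\big(\rho^{-(N-1)-3}\big)=O\big(\rho^{-N-2}\big),\qquad \hat\Delta_k(\rho^2)=O\big(\rho^{-N-1}\big).
\end{equation*}
Here $\Delta_k$ has order $|\rho|^{-(N-2)}$ by~\eqref{Delta_k est}. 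The products in $\Delta_k$ that involve $C'_k$ of size $|\rho|$ are handled the same way, since $\hat C'_k=O(\rho^{-2})$ again gains $\rho^{-3}$ relative to $C'_k$.

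Finally we use the decomposition from point III of the proof of Theorem~\ref{th dir}:
\begin{equation*}
\hat M_k=-\frac{\hat\Delta_k}{\Delta}+\frac{\Delta_k\hat\Delta}{\Delta\tilde\Delta}.
\end{equation*}
Lemma~\ref{b lemma} gives $|\Delta|,|\tilde\Delta|\ge c|\rho|^{-(N-1)}$ on $\mathrm{Im}\,\rho=b$. Hence the first term is $O(\rho^{N-1}\rho^{-N-1})=O(\rho^{-2})$. The second term is $O(\rho^{-(N-2)}\rho^{-N-2}\rho^{2(N-1)})=O(\rho^{-2})$. This proves the lemma.

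The main point needing care is the first step: that the $O(e^{\tau\pi}/\rho^2)$ remainders in~\eqref{smooth kappa} are genuinely $O(1)$ on the fixed line $\mathrm{Im}\,\rho=b$. This holds because $e^{b\pi}$ is constant there. The constants may depend on $\mathrm q,\tilde{\mathrm q}$, which is allowed since the lemma asserts only an $O$-estimate. The rest is bookkeeping of powers of $\rho$.
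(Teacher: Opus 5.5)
Your proposal is correct and follows the same route the paper indicates: redo the proof of Theorem~\ref{th dir} with~\eqref{smooth kappa} in place of~\eqref{hat kappa}, using $q_j(0)=q_j(\pi)=0$ and $\hat\omega_j=0$ to cancel the leading terms, so that each difference gains a factor $\rho^{-2}$ and the power bookkeeping gives $\hat M_k(\rho^2)=O(\rho^{-2})$. One cosmetic point: the exact identity is $\hat M_k=-\hat\Delta_k/\Delta+\tilde\Delta_k\hat\Delta/(\Delta\tilde\Delta)$, with $\tilde\Delta_k$ rather than $\Delta_k$ in the second term (the same slip appears in point III of the paper's proof of Theorem~\ref{th dir}); this is harmless, since $\Delta_k$ and $\tilde\Delta_k$ satisfy the same bound.
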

The lemma is proved similarly to Theorem~\ref{th dir}, but instead of formulas~\eqref{hat kappa} one should use formulas~\eqref{smooth kappa}.

\section{Auxiliary inverse problems}
\label{IP(k)}

In this section, we assume $k = \overline{2, N}.$ Consider an inverse problem on the edge $e_k.$
\begin{IP} \label{aux}
Given $M_k(\lambda),$ recover $q_k.$
\end{IP}
The unique solvability of the problem was proved in~\cite{yurko2005}, where it was an auxiliary step in the process of recovering the potential on the whole graph. 
In this section, we prove the uniform stability of Inverse problem~\ref{aux}, see estimates~\eqref{q_k stab}.

For $x \in (0, \pi)$ and $\lambda, \mu \in \mathbb C,$ we introduce 
$$\tilde D_k(x, \lambda, \mu) = \int_0^x \tilde S_k(t, \lambda) \tilde S_k(t, \mu)\, dt = \frac{\langle \tilde S_k(x, \lambda), \tilde S_k(x, \mu)\rangle}{\lambda - \mu},$$
where $\langle \phi, \psi\rangle := \phi \psi' - \phi' \psi$ is the Wronski determinant.
Proceeding similarly to the proof of Lemma~1.6.2 from~\cite{freil}, one can obtain that
\begin{equation} \label{D}
|D_k(x, \rho^2, \theta^2)| \le \frac{A_R e^{(\mathrm{Im} \, \rho + \mathrm{Im} \, \theta)\pi}}{|\rho| |\theta|(|\rho - \theta| + 1)}, \quad \mathrm{Im} \, \rho, \mathrm{Im} \, \theta \ge b.
\end{equation}
Denote
$$\gamma = \{ \rho^2 \colon \rho \in \mathbb R + ib \}, \quad
\gamma_m = \left\{ \rho^2 \colon \rho \in \mathbb R + ib, \; |\rho|<m+\frac14 \right\}, \; m \in \mathbb N.$$
It is easy to see that $\gamma$ is a parabola: $\gamma = \{x + iy \colon x = \frac{y^2}{4b^2} - b^2\}.$  When calculating contour integrals, we assume that $\gamma$ and $\gamma_m$ are counterclockwise, see Figure~\ref{fig:contour}.
Then, $\mathrm{int}\,\gamma = \{ \rho^2 \colon \mathrm{Im}\,\rho \in [0, b)\}.$
  
\begin{figure}[h!]
\begin{minipage}{0.5\linewidth}
\centering
\begin{tikzpicture}
\draw[domain=-2.5:2.5,left color=gray!20!white, right color=white]  plot (\x*\x/2 - 0.5,\x);
  \draw[thin,->] (-2.5, 0) -- (3, 0);
    \draw[thin,->] (0, -3) -- (0, 3);
  \draw[thick,domain=-2.5:-1.5] plot (\x*\x/2 - 0.5,\x);
  \draw[thick,domain=-1.5:1.5,<-] plot (\x*\x/2 - 0.5,\x);
  \draw[thick,domain=1.5:2.5,<-] plot (\x*\x/2-0.5,\x);
\node at (1.2, 0.5) {$\mathrm{int}\,\gamma$};
\node at (-1.5, -2.75) {$\lambda = x + i y$};
  \node  at (3, -0.25) {$x$};
    \node  at (-0.25, 2.75) {$y$};
    \node at (-0.7, -0.5) {$\bf \gamma$};
\end{tikzpicture}
\end{minipage}
\begin{minipage}{0.47\linewidth}
\centering
\begin{tikzpicture}
  \draw[thin,->] (-3, 0) -- (3, 0);
    \draw[thin,->] (0, -3) -- (0, 3);
  \draw[thick,domain=-2.0:-1.5] plot (\x*\x/2 - 0.5,\x);
  \draw[thick,domain=-1.5:1.5,<-] plot (\x*\x/2 - 0.5,\x);
  \draw[thick,domain=1.5:2.0,<-] plot (\x*\x/2-0.5,\x);
  \node  at (3, -0.25) {$x$};
    \node  at (-0.25, 2.75) {$y$};
    \node at (-0.7, -0.5) {$\bf \gamma_m$};
\node at (-1.75, -2.75) {$\lambda = x + i y$};
 \draw [dashed] (0, 0) circle (2.5);
\end{tikzpicture}
\end{minipage}
\caption{The contours $\gamma$ and $\gamma_m$}
\label{fig:contour}
\end{figure}

By~\eqref{M_k} and~\eqref{Delta ineq}, the poles of the function $M_k(\lambda)$ lie inside $\gamma.$
Applying the method of spectral mappings analogously to~\cite[\S1.6.1]{freil},
for each fixed $x\in(0, \pi),$ we obtain the {\it main equation} of Inverse problem~\ref{aux}:
\begin{equation} \label{main eq1}
\tilde S_k(x, \lambda) = S_k(x, \lambda) - \frac{1}{2\pi i} \mathop{\int}_{\gamma}  {\tilde D}_k(x, \lambda, \mu) \hat M_k(\mu) S_k(x, \mu) \, d\mu.
\end{equation}
Equations of such type are the key step for proving the uniqueness of solution of inverse spectral problems, see~\cite{freil}.
We use~\eqref{main eq1} to obtain a formula relating the components $\hat q_k$ and $\hat M_k.$

\begin{theorem} For a.e. $x \in (0, \pi),$ the following formula is valid:
\begin{equation} \label{qq}
\hat q_k(x) = \frac{1}{\pi i} \mathop{l.i.m.}_{m \to \infty} \mathop{\int}_{\gamma_m} (\tilde S_k(x, \mu) S_k(x, \mu))' \hat M_k(\mu) \, d\mu.
\end{equation}
\end{theorem}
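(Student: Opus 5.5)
Formula \eqref{qq} will be derived from the main equation \eqref{main eq1} in the standard way of the method of spectral mappings (cf.\ the derivation of the reconstruction formula in \cite[\S1.6.1]{freil}). I first prove it for smooth potentials, where all integrals converge absolutely, and then extend it to $L_2$ by approximation, using the stability estimates of Section~\ref{direct problem}.

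\emph{Step 1 (smooth case).} Let $\mathrm q,\tilde{\mathrm q}\in C^{(2)}_0(\Gamma)$ satisfy \eqref{omega eq} and \eqref{q bounded}. By Lemma~\ref{G_delta}, $\hat M_k(\mu)=O(\rho^{-2})$ on $\gamma$. Together with \eqref{D} and the standard bounds $|S_k(x,\mu)|,|\tilde S_k(x,\mu)|\le A_R e^{\tau x}/|\rho|$, this makes the integral in \eqref{main eq1} converge absolutely. It can then be differentiated twice in $x$ under the integral sign.

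I rewrite \eqref{main eq1} as $\tilde S_k(x,\lambda)=S_k(x,\lambda)-\frac{1}{2\pi i}\int_\gamma \tilde D_k(x,\lambda,\mu)\hat M_k(\mu)S_k(x,\mu)\,d\mu$. Next I apply $\tilde\ell:=-d^2/dx^2+\tilde q_k$ and $\ell:=-d^2/dx^2+q_k$ appropriately. Using $\frac{d}{dx}\tilde D_k(x,\lambda,\mu)=\tilde S_k(x,\lambda)\tilde S_k(x,\mu)$, one gets
\[
\ell S_k-\tilde\ell\tilde S_k = (q_k-\tilde q_k)S_k+\dots
\]
Collecting terms in the usual way, one obtains $\tilde S_k(x,\lambda)\bigl(q_k-\tilde q_k\bigr)(x)\cdot(\ldots)$ in the form
\[
\hat q_k(x)\,\tilde S_k(x,\lambda)=-\frac{2}{2\pi i}\,\tilde S_k(x,\lambda)\int_\gamma \bigl(\tilde S_k(x,\mu)S_k(x,\mu)\bigr)'\hat M_k(\mu)\,d\mu ,
\]
up to sign conventions. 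I expect the sign to match \eqref{qq}, i.e.\ $\hat q_k=\frac{1}{\pi i}\int_\gamma(\tilde S_kS_k)'\hat M_k\,d\mu$.

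This is exactly the computation of Lemma~1.6.3/Theorem~1.6.4 in \cite{freil}, with the Weyl function replaced by $\hat M_k$ and $\varphi$ replaced by $S$. It is valid because, for smooth potentials, $(\tilde S_kS_k)'\hat M_k$ is integrable on $\gamma$: the product is $O(e^{2bx}/|\rho|)\cdot O(\rho^{-2})$. Hence in the smooth case the limit over $\gamma_m$ is an ordinary limit.

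\emph{Step 2 (approximation).} For general $q_k,\tilde q_k\in L_2$, I choose $\mathrm q^n,\tilde{\mathrm q}^n\in C^{(2)}_0(\Gamma)$ with the same $\omega_j$, with norms $<R$, and converging in $L_2$. Define $F_m(x)=\frac{1}{\pi i}\int_{\gamma_m}(\tilde S_kS_k)'\hat M_k\,d\mu$.

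Using $(\tilde S_kS_k)'(x,\mu)=\cos2\rho x+O(e^{2bx}/|\rho|)$ on $\gamma$, the main part of $F_m$ is a truncated Fourier-type integral $\frac{1}{\pi i}\int_{|\rho|<m+1/4}2\rho\cos(2\rho x)\hat M_k(\rho^2)\,d\rho$. The Plancherel theorem along $\mathbb R+ib$ gives
\[
\|F_m\|_{L_2(0,\pi)}\le A_R\bigl(\|\rho\hat M_k\|_{L_2(\mathbb R+ib)}+\ldots\bigr).
\]
This requires a version of Theorem~\ref{th dir} with the weight $\rho$. I would obtain it by extracting the leading asymptotics of $\hat M_k$, whose $\rho^{-1}$ part vanishes thanks to \eqref{omega eq}. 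The same estimates give uniform-in-$m$ $L_2$ bounds and Cauchy convergence as $m\to\infty$. Passing to the limit $n\to\infty$ then proves \eqref{qq} with l.i.m.\ for a.e.\ $x$.

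\emph{Main obstacle.} I expect the main difficulty to be Step 2: showing that the remainder terms, and the contributions near the vertex of the parabola, depend continuously in $L_2(0,\pi)$ on the potentials. Concretely, one has to show that the operator $\hat M_k\mapsto F_m$ is uniformly bounded, so that l.i.m.\ exists and commutes with the approximation. I would try to handle it by splitting $(\tilde S_kS_k)'$ into its transmutation representation, $\cos2\rho x$ plus an integral of $\cos 2\rho t$ against a kernel that is uniformly bounded in $L_2$ by Theorem~\ref{trans stab}. Each piece would then be bounded via Plancherel.
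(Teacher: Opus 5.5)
Your overall plan is the paper's: the smooth case via Lemma~\ref{G_delta} (your Step~1 is the paper's point~I), then approximation controlled by Theorem~\ref{th dir}. There is a genuine gap in Step~2, and it comes from a wrong asymptotic formula.

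You write $(\tilde S_kS_k)'(x,\mu)=\cos 2\rho x+O(e^{2bx}/|\rho|)$. But \eqref{0} gives $\rho S_k(x,\rho^2)=\sin\rho x+O(e^{\tau x}/|\rho|)$ and $S_k'(x,\rho^2)=\cos\rho x+O(e^{\tau x}/|\rho|)$, hence
\[
\rho\bigl(\tilde S_k(x,\rho^2)S_k(x,\rho^2)\bigr)'=\sin 2\rho x+\kappa_k(x,\rho),\qquad \|\kappa_k(x,\cdot)\|_{L_2(\mathbb R+ib)}\le A_R .
\]
So the leading term is $\sin 2\rho x/\rho$. You used this yourself in Step~1, where you bounded $(\tilde S_kS_k)'$ by $O(e^{2bx}/|\rho|)$.

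The spurious factor $\rho$ makes you require $\|\rho\hat M_k(\rho^2)\|_{L_2(\mathbb R+ib)}$, and this is in general \emph{infinite} for $L_2$ potentials. Condition \eqref{omega eq} does not remove a $\rho^{-1}$-term. It removes a term of order one; for the one-edge function $M_1=-C_1(\pi,\lambda)/S_1(\pi,\lambda)$ this term is $-\hat\omega_1/\sin^2\rho\pi$. What remains is of the size of the $\hat\kappa_{sj}$, i.e.\ Fourier transforms of $L_2$ kernels such as $\hat{\dot P}(\pi,\cdot)$, whose leading part is built from $\hat q_k$ itself. Requiring $\rho\hat M_k\in L_2$ would essentially force one more derivative of $\hat q_k$ in $L_2$.

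Consequently your route gives neither the uniform-in-$m$ bound for $F_m$ nor the limit $n\to\infty$; the latter would need $\|\rho(M^n_k-M_k)\|_{L_2(\mathbb R+ib)}\to0$. Separately, a remainder of size $O(e^{2bx}/|\rho|)$ in $(\tilde S_kS_k)'$ is too weak. After $d\mu=2\rho\,d\rho$ it gives an integrand $O(1)\cdot\hat M_k$, which is not integrable when only $\hat M_k\in L_2$ is known.

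With the correct asymptotics no weighted estimate is needed; this is the paper's point~III. After $\mu=\rho^2$ the factors $\rho$ cancel:
\[
\frac{1}{\pi i}\int_{\gamma_m}(\tilde S_kS_k)'\hat M_k\,d\mu=\frac{2}{\pi i}\int \sin 2\rho x\,\hat M_k(\rho^2)\,d\rho+\frac{2}{\pi i}\int\kappa_k(x,\rho)\,\hat M_k(\rho^2)\,d\rho .
\]
Both integrals are over the part of $\mathbb R+ib$ corresponding to $\gamma_m$.

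Write $\sin 2\rho x$ via $e^{\pm 2i\rho x}$; the factors $e^{\pm 2bx}$ are bounded for $x\in[0,\pi]$. Then the first term is a combination of truncated Fourier transforms of $\hat M_k(\rho^2)\in L_2$, which converge in $L_2(0,\pi)$ by Plancherel. The second term converges absolutely by Cauchy--Bunyakovsky. This gives a bound by $A_R\|\hat M_k\|_{L_2(\mathbb R+ib)}$, which is exactly the unweighted quantity controlled by Theorem~\ref{th dir}.

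For the approximation, the kernel difference $(\tilde S_kS_k-\tilde\varphi^n_k\varphi^n_k)'$ is $O(|\rho|^{-2})\bigl(\|\mathrm p^n-\mathrm q\|+\|\tilde{\mathrm p}^n-\tilde{\mathrm q}\|\bigr)$ by Theorem~\ref{trans stab}. Also $\|M^n_k-M_k\|_{L_2(\mathbb R+ib)}\to0$ by Theorem~\ref{th dir}, which requires $\omega_j(\mathrm p^n)=\omega_j(\mathrm q)$; your phrase ``with the same $\omega_j$'' must be taken in that sense. The vertex of the parabola, which you flag as a difficulty, is harmless: on $\mathbb R+ib$ one has $|\rho|\ge b>0$, and all bounds are uniform there.
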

\begin{proof}
I. First, suppose that $\mathrm{q}, \mathrm{\tilde q} \in C_0^{(2)}(\Gamma).$ By Lemma~\ref{G_delta}, we have $\hat M_k(\rho^2) \in L(\gamma).$ Differentiating with respect to $x$ twice both sides of~\eqref{main eq1}, analogously to the proof of~\cite[lemma~1.6.5]{freil}, we obtain the formula
\begin{equation*} \label{C formula}
\hat q_k(x) = \frac{1}{\pi i} \mathop{\int}_{\gamma} (\tilde S_k(x, \mu) S_k(x, \mu))' \hat M_k(\mu) \, d\mu, \quad x \in (0, \pi).
\end{equation*}
In this case, differentiation under the integral signs is valid, because by virtue of~\eqref{CS}, \eqref{D}, and the property $\hat M_k(\rho^2) \in L(\gamma),$ the integrals obtained after differentiation converge absolutely and uniformly in $x,$ see~\cite[\S7.1.5]{podkorytov}.

 II. Consider the general case of potentials $\mathrm{q}$ and $\mathrm{\tilde q}$ with the components from $L_2(0, \pi).$
Let $\{ {\mathrm p}^n\}_{n=1}^\infty$ be a sequence of potentials $\mathrm{p}^n =: [p_j^n(x)]_{j=1}^N$ satisfying the conditions
\begin{equation} \label{p to q}
\mathrm{p}^n \in C^{(2)}_0(\Gamma),\quad
\| \mathrm{p}^n - \mathrm{q}\| \to 0, \; n \to \infty.
\end{equation}
As the components of the sequence members, one can take
partial sums of the Fourier sine series:
\begin{equation} \label{fourier}
p^n_j(x) = \sum_{m=1}^n \beta_j^m \sin mx , \quad \beta_j^m := \frac{2}{\pi} \int_0^\pi \sin mt \, q_j(t)\,dt, \quad j = \overline{1, N},
\end{equation}
then conditions~\eqref{p to q}  are satisfied.
Let $\{ \mathrm{ \tilde p}^n\}_{n=1}^\infty$~ be a sequence constructed in the same way for $\tilde{\mathrm q}.$
We can immediately assume that
$$v_{j}^n := \int_0^\pi \big({p}^n_j(x) - \tilde{p}^n_j(x)\big) dx = 0, \quad j = \overline{1, N}, \; n \ge 1.$$
Otherwise, instead of~\eqref{fourier}, one should take
\begin{equation*}
p^n_j(x) = \sum_{m=1}^n \beta_j^m \sin mx - \frac{ v_{j}^n}{2} \sin x, \quad j = \overline{1, N}.
\end{equation*}
Due to~\eqref{omega eq} and~\eqref{p to q}, we have $\{ v_j^n\}_{n=1}^\infty \to 0,$ and the condition $\| \mathrm{p}^n - \mathrm{q}\| \to 0$ is not violated.

Let $n\ge1$ and ${\mathrm M}^n = [M_s^n(\lambda)]_{s=2}^N$ be the Weyl vector
of the problem~\eqref{eq}--\eqref{cont} with the potential $\mathrm{p}^n.$
Introduce
the solution of the Cauchy problem
$$-(\varphi^n_k)'' + p^n_k(x)\varphi^n_k = \lambda \varphi^n_k, \; x \in (0, \pi),
\quad \varphi^n_k(0) = 0, \; (\varphi^n_k)'(0) = 1.$$
We also define similar objects for the potential $\tilde{\mathrm{p}}^n.$ Inequalities~\eqref{q bounded} together with the properties $\| \mathrm{p}^n - \mathrm{q}\| \to 0$ and
$\| \tilde{\mathrm{p}}^n - \tilde{\mathrm{q}}\| \to 0$   yield $\| \mathrm{p}^n\| < R$ and $\| \tilde{\mathrm{p}}^n\| < R$ for sufficiently large $n >K.$ Then, for $n > K,$ the potentials $\mathrm{q}=\mathrm{p}^n$ and $\tilde{\mathrm q}=\tilde{\mathrm p}^n$ satisfy the conditions of Theorem~\ref{main th} and belong to the class $C^{(2)}_0(\Gamma).$
We have the formula
\begin{equation} \label{pred}
\hat{p}^n_k(x) = \frac{1}{\pi i} \mathop{\int}_{\gamma} (\tilde \varphi^n_k(x, \mu) \varphi^n_k(x, \mu))' \hat{M}^n_k(\mu) \, d\mu, \quad n > K,
\end{equation}
which was proved in point I.
As $n \to \infty$, the left-hand side of the formula tends to $\hat q_k(x)$ in the $L_2$-norm. We will prove that the right-hand side of the formula tends to the right-hand side of~\eqref{qq} in the same norm.

III. Denote the right-hand side of formula~\eqref{qq} by
$$I(S_k, \tilde S_k, \hat M_k; x) := \frac{1}{\pi i} \mathop{l.i.m.}_{m \to \infty} \mathop{\int}_{\gamma_m} (\tilde S_k(x, \mu) S_k(x, \mu))' \hat M_k(\mu) \, d\mu.$$
It follows from Lemma~\ref{S_j lemma} that for $x \in [0, \pi]$
\begin{equation*} \label{SS'}
\rho\big(S_k(x, \rho^2) \tilde S_k(x, \rho^2)\big)' = \sin 2 \rho x + \kappa_{k}(x, \rho), \quad \| \kappa_k(x, \cdot)\|_{L_2({\mathbb R}+ib)} \le A_R.
\end{equation*}
Using this representation, after changing the integration variable $\mu = \rho^2,$ we have
\begin{equation}
I(S_k, \tilde S_k, \hat M_k; x) =    \frac{2}{\pi i} \mathop{l.i.m.}_{m \to \infty}  \int_{-m+ib}^{m+ib} \sin 2 \rho x \hat M_k(\rho^2) \,d\rho + \frac{2}{\pi i}\mathop{\int}_{\mathbb{R}+ib} \kappa_k(x, \rho)\hat M_k(\rho^2) \,d\rho.
\label{I_M}
\end{equation}
Herein, the first term is well-defined as an $L_2$-function of $x,$ being a linear combination of the Fourier transforms of $L_2$-functions:
\begin{equation*}\begin{split}
\mathop{l.i.m.}_{m \to \infty}  \int_{-m+ib}^{m+ib} \sin 2 \rho x \hat M_k(\rho^2) \,d\rho = \mathop{l.i.m.}_{m \to \infty} \int_{-m}^m \frac{e^{2i z x} e^{-2bx} - e^{-2izx}e^{2bx}}{2i} \hat M_k((z + ib)^2)\,dz\\
= \frac{e^{-2bx}}{2i}\mathop{l.i.m.}_{m \to \infty} \int_{-m}^m e^{-i \sigma x} \hat M_k\left(\Big(-\frac\sigma2 + ib\Big)^2\right)\,d\sigma- \frac{e^{2bx}}{2i}\mathop{l.i.m.}_{m \to \infty} \int_{-m}^m e^{-i\sigma x}\hat M_k\left(\Big(\frac\sigma2 + ib\Big)^2\right)\,d\sigma.
\end{split}
\end{equation*}
The first term in~\eqref{I_M} does not exceed $A_R \| \hat M_k(\rho^2) \|_{L_2(\mathbb R + ib)}$ in the $L_2$-norm by the properties of the Fourier transform.
By the Cauchy--Bunyakovsky inequality, the integral in the second term in~\eqref{I_M} converges uniformly and absolutely, and its modulus is bounded by the number $A_R \| \hat M_k(\rho^2) \|_{L_2(\mathbb R + ib)}.$
Thus, $I(S_k, \tilde S_k, \hat M_k; \cdot)$ is well-defined as an $L_2$-function, and
\begin{equation} \label{I M_k}
\| I(S_k, \tilde S_k, \hat M_k; \cdot) \|_{L_2(0, \pi)} \le A_R \| \hat M_k(\rho^2) \|_{L_2(\mathbb R + ib)}.
\end{equation}

We write the right-hand side of the equality~\eqref{pred} as $I(\varphi_k^n, \tilde\varphi_k^n, \hat M^n_k; x).$ Then,
\begin{equation} \begin{split}
I(S_k, \tilde S_k, \hat M_k; x) - I(\varphi_k^n, \tilde\varphi_k^n, \hat M^n_k; x) &= H_n(x) +  I(S_k, \tilde S_k,[M^n_k - {M_k}] + [\tilde M_k - \tilde M^n_k]; x), \\
H_n(x) := \frac{1}{\pi i} \mathop{l.i.m.}_{m \to \infty} &\mathop{\int}_{\gamma_m} [\tilde S_k(x, \mu) S_k(x, \mu)-\tilde \varphi^n_k(x, \mu) \varphi^n_k(x, \mu) ]'\hat M_k(\mu)\,d\mu.
\end{split} \label{pp2}
\end{equation}
Since $\varphi_k^n(x, \lambda)$ denotes the solution corresponding to $p_k^n$, which is analogous to the solution $S_k(x, \lambda)$ corresponding to $q_k$, we can use the formulas of Lemma~\ref{S_j lemma} for it. Applying the corresponding formulas from~\eqref{S_j},~\eqref{hat kappa}, and~\eqref{CS}, we obtain the estimate
$$\big|[\tilde S_k(x, \rho^2) S_k(x, \rho^2)-\tilde \varphi^n_k(x, \rho^2) \varphi^n_k(x, \rho^2)]'\big| \le A_R \rho^{-2} (\|\mathrm{p}^n - \mathrm{q} \| + \|\tilde{\mathrm{p}}^n - \tilde{\mathrm{q}} \|).$$
Using the Cauchy--Bunyakovsky inequality and this estimate, by Theorem~\ref{th dir} we arrive at
$$|H_n(x)|\le A_R (\|\mathrm{p}^n - \mathrm{q} \| + \|\tilde{\mathrm{p}}^n - \tilde{\mathrm{q}} \|)\big\| \hat M_k(\rho^2) \big\|_{L_2(\mathbb R + ib)} \le A_R (\|\mathrm{p}^n - \mathrm{q} \| + \|\tilde{\mathrm{p}}^n - \tilde{\mathrm{q}} \|).$$
Taking into account~\eqref{I M_k},~\eqref{pp2}, and the result of Theorem~\ref{th dir}, we obtain
\begin{equation*}
\begin{split}
\| I(S_k, \tilde S_k, \hat M_k; \cdot) - I(\varphi_k^n, \tilde\varphi_k^n, \hat M^n_k; \cdot)\|_{L_2(0, \pi)} \\
\le  A_R(\| M^n_k  -  M_k\|_{L_2(\mathbb R + ib)}  +\| \tilde M^n_k  -  \tilde M_k \|_{L_2(\mathbb R + ib)})
+A_R (\|\mathrm{p}^n - \mathrm{q} \| + \|\tilde{\mathrm{p}}^n - \tilde{\mathrm{q}} \|) \\
\le  
A_R (\|\mathrm{p}^n - \mathrm{q} \| + \|\tilde{\mathrm{p}}^n - \tilde{\mathrm{q}} \|) \to 0, \quad n \to \infty.
\end{split}
\end{equation*}
Thus, the limit of $\big\{ I(\varphi_k^n, \tilde\varphi_k^n, \hat M^n_k; \cdot) \big\}_{n=1}^\infty$ in $L_2(0, \pi)$ as $n \to \infty$ equals the right-hand side of~\eqref{qq}. On the other hand, by~\eqref{pred}, this limit equals $\hat q_k.$ Formula~\eqref{qq} is proved.
\end{proof}
From the proof, it is clear that the right-hand side of formula~\eqref{qq} is well-defined as an $L_2$-function. Moreover, from~\eqref{qq} and~\eqref{I M_k} we obtain a corollary.

\begin{corollary} For $k=\overline{2, N},$ the following estimates hold:
\begin{equation} \label{q_k stab}
\| \hat q_k \|_{L_2(0, \pi)} \le A_R \| \hat M_k(\rho^2)\|_{L_2({\mathbb R}+ib)}.
\end{equation}
\end{corollary}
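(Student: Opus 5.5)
The estimate is essentially a direct consequence of formula~\eqref{qq} combined with the bound~\eqref{I M_k} obtained in the course of its proof. Formula~\eqref{qq} says that, for almost every $x\in(0,\pi)$, the function $\hat q_k$ coincides with $I(S_k,\tilde S_k,\hat M_k;x)$. Taking $L_2(0,\pi)$-norms of both sides then gives $\|\hat q_k\|_{L_2(0,\pi)} = \|I(S_k,\tilde S_k,\hat M_k;\cdot)\|_{L_2(0,\pi)}$, and~\eqref{I M_k} bounds the right-hand side by $A_R\|\hat M_k(\rho^2)\|_{L_2(\mathbb R+ib)}$. This is exactly~\eqref{q_k stab}.

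To make sure the constant depends only on $R$, I would recheck how~\eqref{I M_k} was derived. After the substitution $\mu=\rho^2$, the integral splits as in~\eqref{I_M} into two terms.

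\begin{itemize}
\item The first term is a combination of Fourier transforms of $\hat M_k((\pm\sigma/2+ib)^2)$, multiplied by $e^{\mp 2bx}$. By Plancherel's theorem its $L_2(0,\pi)$-norm is at most $e^{2b\pi}$ times a multiple of $\|\hat M_k\|_{L_2(\mathbb R+ib)}$. Since $b$ is given explicitly in terms of $R$ and $N$ by~\eqref{b}, this factor depends only on $R$ (with $N$ fixed).
\item The second term is estimated pointwise in $x$ by the Cauchy--Bunyakovsky inequality. This uses the bound $\|\kappa_k(x,\cdot)\|_{L_2(\mathbb R+ib)}\le A_R$, uniformly in $x\in[0,\pi]$, which follows from Lemma~\ref{S_j lemma} applied on $[0,x]$ under~\eqref{q bounded}.
\end{itemize}

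The main point needing care is this uniformity in $x$ of the $\kappa_k$ estimate. For $x=\pi$ it is stated in Lemma~\ref{S_j lemma}. For general $x$, I would use the same transmutation-kernel representation, $\rho(S_k\tilde S_k)'$ expressed as $\sin 2\rho x$ plus integrals with kernels whose $L_2$ norms are bounded by Theorem~\ref{trans stab} uniformly in $x\le\pi$. With that in place, the constant is uniform and the corollary follows.
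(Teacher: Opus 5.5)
Your proposal is correct and is exactly the paper's argument: the corollary follows by taking $L_2(0,\pi)$-norms in \eqref{qq} and applying the bound \eqref{I M_k} established in the proof of that formula. Your extra check that $\|\kappa_k(x,\cdot)\|_{L_2(\mathbb R+ib)}\le A_R$ holds uniformly in $x\in[0,\pi]$ (via Theorem~\ref{trans stab} with $\tilde q=0$) makes explicit a step the paper only asserts.
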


\label{IP(k)}

\section{Estimate on the edge $e_1.$ Proof of Theorem~\ref{main th}}
\label{final}

In the previous section, we considered the edges $e_k$ with the number $k=\overline{2, N}$ and obtained necessary estimates~\eqref{q_k stab}.
For the edge $e_1,$ the situation is different: the corresponding Weyl function $M_1$ is absent in the input data of Inverse problem~\ref{main ip}.
We define it as follows:
\begin{equation}\label{M_1}
M_1(\lambda) = -\frac{C_1(\pi, \lambda)}{S_1(\pi, \lambda)}.
\end{equation}
The function $M_1$ can be interpreted as the Weyl function for a tree consisting of a single edge $e_1.$
It is calculated from the Weyl vector and the objects corresponding to the edges with the number $k=\overline{2, N},$ see formula~\eqref{M_1 comp} below. This corresponds to the procedure of  ``cutting" the boundary edges proposed in~\cite{yurko2005}, after which Inverse problem~\ref{main ip} is reduced to the same inverse problem on a smaller subtree.

Proceeding similarly to the proof of Theorem~\ref{th dir}, we obtain that
$$\| \hat M_1(\rho^2)\|_{L_2(\mathbb R + ib)} \le A_R \| q_1\|_{L_2(0, \pi)}. $$
Therefore, $\hat M_1(\rho^2) \in L_2(\mathbb R + ib).$
The assertion of Lemma~\ref{G_delta} is also true for $k=1.$
From estimate~\eqref{S_j ineq} and formula~\eqref{M_1} it follows that the poles of the meromorphic function $M_1(\lambda)$ lie inside the contour~$\gamma.$ This means that all arguments of Section~\ref{IP(k)} are applicable for $k=1.$
As a result, estimate~\eqref{q_k stab} holds for $k=1$ as well.

We obtain a formula for calculating $M_1$ from the Weyl vector $\mathrm{M}(\lambda)=[M_k(\lambda)]_{k=2}^N.$
Let us consider the Weyl solution $\Phi_N(x, \lambda) = [\Phi_{jN}(x, \lambda)]_{j=1}^N.$
From boundary conditions~\eqref{Weyl bc} and the fact that $\{ S_j, C_j \}$ is a fundamental system of solutions on each edge $e_j,$ it follows that
\begin{equation*}
\begin{split}
 \Phi_{1N}(x, \lambda) = \beta_{1N}(\lambda)(M_1(\lambda) S_1(x, \lambda)  + C_1(x, \lambda)), \\
\Phi_{kN}(x, \lambda) = \alpha_{kN}(\lambda) S_k(x, \lambda), \quad k=\overline{2, N-1},\\
\Phi_{NN}(x, \lambda) = M_N(\lambda) S_N(x, \lambda)+C_N(x, \lambda).
\end{split}
\end{equation*}
Substituting these representations into matching conditions~\eqref{cont}, we have
\begin{equation*}
\begin{split}
\beta_{1N}(\lambda) = \alpha_{2N}(\lambda)S_2(\pi, \lambda) = \ldots =  \alpha_{N-1,N}(\lambda)S_{N-1}(\pi, \lambda) = M_N(\lambda) S_N(\pi, \lambda)+C_N(\pi, \lambda),\\
\beta_{1M}(\lambda) M_1(\lambda) = \sum_{k=2}^{N-1} \alpha_{kN}(\lambda)S'_k(\pi, \lambda) + M_N(\lambda) S'_N(\pi, \lambda)+C'_N(\pi, \lambda).
\end{split}
\end{equation*}
From here we obtain
$$M_1(\lambda) = \sum_{j=2}^{N-1} \frac{S'_j(\pi, \lambda)}{S_j(\pi, \lambda)} + \frac{M_N(\lambda) S'_N(\pi, \lambda)+C'_N(\pi, \lambda)}{M_N(\lambda) S_N(\pi, \lambda)+C_N(\pi, \lambda)}.$$
Adding and subtracting $S'_N(\pi, \lambda)/S_N(\pi, \lambda),$  taking into account the identity $S_N C_N' - S_N' C_N \equiv -1,$ after manipulations we arrive at the formula
\begin{equation} \label{M_1 comp}
M_1(\lambda) = \sum_{j=2}^N \frac{S'_j(\pi, \lambda)}{S_j(\pi, \lambda)} - \frac{1}{S_N(\pi, \lambda)(S_N(\pi, \lambda)M_N(\lambda) + C_N(\pi, \lambda))}.\end{equation}

\begin{lemma} Let us denote $F_k(\lambda) := S_k(\pi, \lambda)(S_k(\pi, \lambda)M_k(\lambda) + C_k(\pi, \lambda)),$ $k=\overline{2, N}.$ Then,
\begin{equation} \label{F_k}
F_k(\lambda) = \frac{S_1(\pi, \lambda) \ldots S_N(\pi, \lambda)}{\Delta(\lambda)}.
\end{equation}
\end{lemma}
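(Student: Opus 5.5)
We derive the identity directly from formulas \eqref{M_k} and \eqref{Delta}. Substituting $M_k=-\Delta_k/\Delta$ into the definition of $F_k$ gives
$$F_k(\lambda)=\frac{S_k(\pi,\lambda)\big(C_k(\pi,\lambda)\Delta(\lambda)-S_k(\pi,\lambda)\Delta_k(\lambda)\big)}{\Delta(\lambda)}.$$
It therefore suffices to prove the polynomial identity
$$C_k\Delta - S_k\Delta_k = \prod_{l\ne k} S_l,$$
where all functions are evaluated at $(\pi,\lambda)$. Multiplying this identity by $S_k$ then yields \eqref{F_k}.

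To prove the identity, we introduce the abbreviation
$$P_k:=C_1\prod_{l\ne 1,k}S_l+\sum_{j\ne 1,k}S'_j\prod_{l\ne j,k}S_l,$$
so that \eqref{Delta} reads $\Delta_k=C_kP_k+C'_k\prod_{l\ne k}S_l$. We then split $\Delta$ according to whether the index $k$ is "special" in each term. In the first term and in the summands with $j\ne k$, the factor $S_k$ appears in the product and can be pulled out. Only the summand $j=k$ does not contain $S_k$; it equals $S'_k\prod_{l\ne k}S_l$. This gives
$$\Delta=S_kP_k+S'_k\prod_{l\ne k}S_l.$$

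Substituting both expressions, we get
$$C_k\Delta-S_k\Delta_k=C_kS_kP_k+C_kS'_k\prod_{l\ne k}S_l-S_kC_kP_k-S_kC'_k\prod_{l\ne k}S_l=(C_kS'_k-S_kC'_k)\prod_{l\ne k}S_l .$$
The terms containing $P_k$ cancel. The Wronskian $S_kC'_k-S'_kC_k\equiv -1$ is constant in $x$ and equals $-1$ at $x=0$ by the initial conditions, so the bracket $C_kS'_k-S_kC'_k$ equals $1$, and the identity follows.

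The only point requiring care is the bookkeeping in the decomposition of $\Delta$: one must check that the terms with $j\ne1,k$ in $\Delta$ match exactly the sum in $P_k$ after factoring out $S_k$, and that the term with $C_1$ matches as well. Since the proof is an identity between entire functions divided by $\Delta$, it holds as an equality of meromorphic functions, in particular away from the zeros of $\Delta$. For $k=N$, the lemma combined with \eqref{M_1 comp} expresses $M_1$ as $\sum_j S'_j/S_j-\Delta/\prod_l S_l$.
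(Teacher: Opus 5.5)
Your proof is correct and follows the paper's route: it substitutes $M_k=-\Delta_k/\Delta$, expands $C_k\Delta-S_k\Delta_k$ using \eqref{Delta}, and cancels the common terms. That cancellation is exactly your decomposition $\Delta=S_kP_k+S'_k\prod_{l\ne k}S_l$, after which the identity $C_kS'_k-C'_kS_k\equiv 1$ leaves $\prod_{l\ne k}S_l$, as in your argument.
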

\begin{proof}
For brevity, we omit the argument  $\lambda$ in the functions $\Delta$ and $\Delta_k,$ as well as the pair of arguments $(\pi, \lambda)$ in $S_j,$ $C_j,$ and in their derivatives.
Applying~\eqref{M_k} and \eqref{Delta}, we have
\begin{equation*}
\frac{F_k}{S_k} = \frac{1}{\Delta}\left[ C_k \Big(C_1 \prod_{l=2}^N S_l + \sum_{j=2}^N S_j' \prod_{l \ne j} S_l\Big) - S_k C_k \Big(C_1 \prod_{l\ne 1, k} S_l + \sum_{j\ne1,k} S_j' \prod_{l\ne j,k} S_l\Big) - S_k C_k' \prod_{l\ne k}S_l\right].
\end{equation*}
Expanding the brackets and canceling out terms with opposite signs, by virtue of the identity $C_k S_k' - C_k'S_k \equiv 1,$ we obtain
$$
\frac{F_k}{ S_k} = \frac{1}{\Delta}\left[C_k S_k' \prod_{l \ne k} S_l - S_k C_k' \prod_{l\ne k}S_l\right] = \frac{1}{\Delta} \prod_{l \ne k} S_l.
$$
\end{proof}
From the statement of the lemma, it is clear that the last term $-1/F_N(\lambda)$ in formula~\eqref{M_1 comp} can be replaced by any other term $-1/F_k(\lambda),$ $k=\overline{2, N-1}.$ Thus, we obtain the same formula~\eqref{M_1 comp} regardless of the index $k=\overline{2, N}$ of the Weyl solution $\Phi_k(x, \lambda)$ under consideration.

\begin{proof}[Proof of Theorem~\ref{main th}]
In view of estimates~\eqref{q_k stab} for $k=\overline{1, N},$ it remains to prove that
\begin{equation} \label{M_1 stab}
\big\| \hat M_1(\rho^2)\big\|_{L_2({\mathbb R}+ib)} \le A_R \big\| \hat {\mathrm M}\big\|_{L_2({\mathbb R}+ib)}.
\end{equation}
Denote
$$I_j(\lambda) = \frac{S'_j(\pi, \lambda)}{S_j(\pi, \lambda)}, \quad j=\overline{2, N}.$$
From formula~\eqref{M_1 comp} it follows that
\begin{equation} \label{|M_1|}
\big|\hat M_1(\rho^2)\big| \le \sum_{j=2}^N \big|\hat I_j(\rho^2)\big| + \left|\frac{\hat F_N(\rho^2)}{F_N(\rho^2) \tilde F_N(\rho^2)}\right|.
\end{equation}
Let us estimate the last term on the right-hand side of~\eqref{|M_1|}. Similarly to~\eqref{Delta_k est}, we obtain
\begin{equation} \label{D est}
|\rho^{N-1} \Delta(\rho^2)| \le A_R, \quad \rho \in {\mathbb R} + ib.
\end{equation}
Due to~\eqref{S_j ineq},~\eqref{F_k}, and~\eqref{D est}, we have
\begin{equation}
|F_N(\rho^2)|^{-1} \le A_R|\rho|, \quad |\tilde F_N(\rho^2)|^{-1} \le A_R|\rho|, \quad \rho \in {\mathbb R}+ib.
\label{F_N^-1}
\end{equation}
Furthermore,
$$|\hat F_N(\rho^2)| \le \big(|S_N M_N| + |\tilde S_N \tilde M_N| +|C_N|\big)|\hat S_N| + |S_N \tilde S_N||\hat M_N| + |\tilde S_N||\hat C_N|. $$
Using~\eqref{CS},~\eqref{kappa CS}, and the estimate $|M_N(\rho^2)| \le A_R |\rho|$ for $\rho \in {\mathbb R}+ib,$ we arrive at
$$
\big\| \rho^2 \hat F_N(\rho^2) \big\|_{L_2({\mathbb R}+ib)} \le A_R \big(\| \hat q_N\|_{L_2(0, \pi)} + \| \hat M_N(\rho^2)\|_{L_2({\mathbb R}+ib)}\big)
\overset{\eqref{q_k stab}}{\le} A_R \big\| \hat M_N(\rho^2)\big\|_{L_2({\mathbb R}+ib)}.
$$
Therefore, due to~\eqref{F_N^-1},
\begin{equation}
\left\| \frac{\hat F_N(\rho^2)}{ F_N(\rho^2) \tilde F_N(\rho^2)}\right\|_{L_2({\mathbb R}+ ib)} \le A_R\big\| \hat{\mathrm M}\big\|_{L_2({\mathbb R}+ib)}.
 \label{F_N stab}
\end{equation}
Similarly, taking into account~\eqref{S_j ineq},~\eqref{kappa CS}, and~\eqref{q_k stab}, we estimate
\begin{equation} \label{I_j}
 \big\|\hat I_j(\rho^2)\big\|_{L_2({\mathbb R}+ib)} \le  A_R\big\| \hat{\mathrm M}\big\|_{L_2({\mathbb R}+ib)}, \quad j=\overline{2, N}.
\end{equation}
Inequalities~\eqref{|M_1|},~\eqref{F_N stab}, and~\eqref{I_j} yield~\eqref{M_1 stab}.
The theorem is proved.
\end{proof}

\begin{remark}
In Theorems~\ref{main th} and~\ref{th dir}, instead of $\| \cdot \|_{L_2({\mathbb R} + ib)},$ one can consider $\| \cdot \|_{L_2({\mathbb R} + i\tau)}$ with any $\tau > b.$ Then, the constants $A_R$ in the resulting estimates will depend not only on $R,$ but also on $\tau.$
\end{remark}

\section{Stability of recovering from the characteristic functions}
\label{char}

In Theorem~\ref{main th}, the line ${\mathbb{R}}+ib,$ on which the spectral characteristics of two boundary value problems are compared, depends on the parameter $R.$
We can proceed to the comparison on the line $\mathbb R$ if we take other spectral characteristics: the characteristic functions $\Delta(\rho^2)$ and $\Delta_k(\rho^2),$ $k=\overline{2, N}.$

\begin{IP} \label{sec ip}
Given the characteristic functions $\Delta(\lambda)$ and $\Delta_k(\lambda),$ $k=\overline{2, N},$ recover $\mathrm{q}.$
\end{IP}
In view of formulas~\eqref{M_k}, this inverse problem reduces to Inverse problem~\ref{main ip}. Using Theorem~\ref{main th}, it is easy to obtain a theorem on the uniform stability of Inverse problem~\ref{sec ip}.

\begin{theorem} \label{444}
Let $R>0.$
For any potentials $\mathrm{ q}$ and $\mathrm{\tilde q}$ satisfying~\eqref{omega eq} and~\eqref{q bounded} the following estimate holds:
\begin{equation*}
\| \mathrm{\hat q}\| \le A_R \varepsilon,
\end{equation*}
where
$$\varepsilon := \left\| \rho^N \hat \Delta(\rho^2)\right\|_{L_2(\mathbb R)} + \sum_{k=2}^N \left\| \rho^{N-1}\hat \Delta_k(\rho^2)\right\|_{L_2(\mathbb R)}$$
and
$A_R>0$ depends only on $R.$
\end{theorem}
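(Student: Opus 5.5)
The approach is to reduce to Theorem~\ref{main th}: show that $\|\hat{\mathrm M}\|_{L_2(\mathbb R+ib)} \le A_R\varepsilon$ and then apply the main theorem. There are three steps: move the $L_2$ estimates of $\rho^N\hat\Delta$ and $\rho^{N-1}\hat\Delta_k$ from the real line to the line $\mathbb R+ib$; use formula~\eqref{M_k} together with the lower bound~\eqref{Delta ineq} on that line; and conclude.

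\emph{Step 1 (shifting the line).} By~\eqref{S_j}, \eqref{omega eq} and~\eqref{hat kappa}, each of $\hat S_j(\pi,\rho^2)$, $\hat S_j'(\pi,\rho^2)$, $\hat C_j(\pi,\rho^2)$, $\hat C_j'(\pi,\rho^2)$ is a combination of the functions $\hat\kappa_{sj}(\rho)$, which are Fourier-type transforms of $L_2$ kernels supported in $[-\pi,\pi]$. Expanding the products in~\eqref{Delta}, I expect
\[
\rho^N\hat\Delta(\rho^2)=\int_{-N\pi}^{N\pi} h(t)e^{i\rho t}\,dt,\qquad h\in L_2(-N\pi,N\pi).
\]
The point is that the leading terms cancel, because $\hat\omega_j=0$. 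The same representation should hold for $\rho^{N-1}\hat\Delta_k(\rho^2)$. This really is an entire function of exponential type $N\pi$ that is square integrable on $\mathbb R$. By the Paley--Wiener theorem, the representation follows directly from this membership, so no bookkeeping of the terms is needed. Plancherel then gives $\|h\|_{L_2}=(2\pi)^{-1/2}\|\rho^N\hat\Delta\|_{L_2(\mathbb R)}$. On the line $\mathbb R+ib$ the function becomes the Fourier transform of $h(t)e^{-bt}$, so
\[
\|\rho^N\hat\Delta(\rho^2)\|_{L_2(\mathbb R+ib)}\le e^{bN\pi}\|\rho^N\hat\Delta(\rho^2)\|_{L_2(\mathbb R)}.
\]
Since $b$ depends only on $R$, the factor $e^{bN\pi}$ is a constant $A_R$. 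The same argument applies to $\hat\Delta_k$.

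\emph{Step 2 (the Weyl functions).} This repeats step III of the proof of Theorem~\ref{th dir}. On $\mathbb R+ib$,
\[
|\hat M_k(\rho^2)|\le A_R|\rho^{N-1}\hat\Delta_k(\rho^2)|+A_R|\rho^{N-2}\Delta_k(\rho^2)|\,|\rho^N\hat\Delta(\rho^2)|,
\]
using~\eqref{Delta ineq} for both $\Delta$ and $\tilde\Delta$, together with~\eqref{Delta_k est}. Taking $L_2(\mathbb R+ib)$ norms and applying Step 1 gives $\|\hat{\mathrm M}\|_{L_2(\mathbb R+ib)}\le A_R\varepsilon$. Theorem~\ref{main th} then yields $\|\hat{\mathrm q}\|\le A_R\varepsilon$.

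The main obstacle is justifying Step 1 rigorously. One must confirm that $\rho^N\hat\Delta(\rho^2)$ is of exponential type at most $N\pi$ and lies in $L_2(\mathbb R)$; the latter holds because $\varepsilon<\infty$, and if $\varepsilon=\infty$ there is nothing to prove. It is cleanest to check the type bound directly from~\eqref{Delta}, \eqref{S_j} and~\eqref{kappa_sj}: these give $|\rho^N\hat\Delta(\rho^2)|\le A_R|\rho|e^{N\pi|\mathrm{Im}\,\rho|}$, which is polynomial times exponential type. Paley--Wiener, in the form for functions of exponential type square integrable on the real axis, then applies. The estimate in the lower half-plane follows from evenness in $\rho$, since the expression depends on $\rho^2$ up to the factor $\rho^N$.
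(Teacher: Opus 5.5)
Your proposal is correct and follows the paper's proof essentially step for step. Both reduce to Theorem~\ref{main th}, transfer the $L_2$ bounds for $\rho^N\hat\Delta(\rho^2)$ and $\rho^{N-1}\hat\Delta_k(\rho^2)$ from $\mathbb R$ to $\mathbb R+ib$ via the Paley--Wiener representation, and then bound $\hat M_k$ on $\mathbb R+ib$ using the lower bound~\eqref{Delta ineq} together with~\eqref{Delta_k est}; your added details (the explicit factor $e^{bN\pi}$, the remark that the growth constant in the Paley--Wiener step need not be uniform in $R$, and evenness of $|\rho^N\hat\Delta(\rho^2)|$ for the lower half-plane) are sound.
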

In the paper~\cite{tree}, the uniform stability of Inverse problem~\ref{sec ip} was proved for potentials with the components from $W^{-1}_2.$
Here, we consider another class of potentials with the components from $L_2,$ and the estimates are obtained in stronger norms.
\begin{proof}
In view of Theorem~\ref{main th}, it suffices to prove that for each $k=\overline{2, N}$
\begin{equation*}
\big\| \hat M_k(\rho^2) \big\|_{L_2({\mathbb R}+ib)} \le A_R \varepsilon.
\end{equation*}
Using the results of Lemma~\ref{S_j lemma} and~\eqref{Delta}, we obtain that
$$|\rho^{N}\hat \Delta(\rho^2)| \le A_R e^{\tau N \pi}, \quad  \rho^{N}\hat \Delta(\rho^2) \in L_2(\mathbb R).$$
The function $\rho^{N}\hat \Delta(\rho^2)$ is entire in $\rho.$
By the Paley--Wiener theorem~\cite[\S19.3]{rudin}, the following representation holds:
$$\rho^{N}\hat \Delta(\rho^2) = \int_{-N\pi}^{N\pi} e^{i \rho t} f(t) \, dt, \quad
f\in L_2(-N\pi, N\pi).$$
It yields that
\begin{equation} \big\|\rho^{N}\hat \Delta(\rho^2)\big\|_{L_2({\mathbb R}+ib)} \le A_R \big\|\rho^{N}\hat \Delta(\rho^2) \big\|_{L_2({\mathbb R})}.
\label{f1}
\end{equation}
Similarly, we obtain 
\begin{equation} \label{f2}
\big\|\rho^{N-1}\hat \Delta_k(\rho^2)\big\|_{L_2({\mathbb R}+ib)} \le A_R \|\rho^{N-1}\hat \Delta_k(\rho^2) \|_{L_2({\mathbb R})}.
\end{equation}

From formula~\eqref{M_k} it follows that for $\rho \in {\mathbb R}+ib,$
\begin{equation*}
\begin{split}
\big|\hat M_k(\rho^2)\big| \le \frac{\big| \rho^{N-1} \hat \Delta_k(\rho^2)\big|}{\big|\rho^{N-1}\Delta(\rho^2)\big|}
+ \frac{\big|\rho^{N-2} \tilde \Delta_k(\rho^2)\big|}{\big|\rho^{N-1}\Delta(\rho^2)\big|}
\frac{ \big| \rho^{N} \hat \Delta(\rho^2)\big|}{\big|\rho^{N-1}\tilde\Delta(\rho^2)\big|}
 \\
 \overset{\eqref{Delta est}, \eqref{Delta_k est}}{\le}  A_R \big| \rho^{N-1} \hat \Delta_k(\rho^2)\big| + A_R  \big| \rho^{N} \hat \Delta(\rho^2)\big|.
\end{split}
\end{equation*}
From here, using~\eqref{f1} and~\eqref{f2}, we obtain
$$\big\|\hat M_k(\rho^2)\big\|_{L_2({\mathbb R}+ib)} \le A_R \big\|\rho^{N}\hat \Delta(\rho^2) \big\|_{L_2({\mathbb R})}+
A_R \|\rho^{N-1}\hat \Delta_k(\rho^2) \|_{L_2({\mathbb R})} \le A_R \varepsilon,$$
and the theorem is proved.
\end{proof}

\vspace*{5mm}

\noindent{\it Acknowledgements.} This work was supported by grant No. 24-71-10003 of the Russian Science Foundation. The author thanks N.P.~Bondarenko for the valuable recommendations which helped to improve the paper.

\section*{Appendix: stability of partial derivatives of TOKs}
\label{App}

Consider the Sturm--Liouville equation with a complex-valued potential $q \in L_2(0, \pi):$
\begin{equation*} 
-y''(x) + q(x) y(x) = \lambda y(x), \quad x \in (0, \pi).
\end{equation*}
Let $S(x, \lambda)$ and $C(x, \lambda)$ be its solutions under the initial conditions
$$S(0, \lambda) = C'(0, \lambda) = 0, \quad S'(0, \lambda) = C(0, \lambda) = 1.$$
Here and below, the prime denotes the derivative with respect to the {\it first} argument.
The derivative of a function $f$ with respect to the {\it second} argument is denoted by $\dot{f}.$

We introduce a parameter $\rho$ such that $\lambda = \rho^2$ and $\arg \rho \in [0, \pi).$
For each $x \in [0, \pi],$ the following representations hold (see, e.g.,~\cite{march,freil}):
\begin{equation} \label{trans1}
\begin{split}
S(x, \lambda) = \frac{\sin \rho x}{\rho} +\int_0^x P(x, t) \frac{\sin \rho t}{\rho} \, dt,
\\
C(x, \lambda) =\cos \rho x +\int_0^x K(x, t) \cos \rho t \, dt,
\end{split}
\end{equation}
where
\begin{equation*}
\begin{split}
P, K \in L_2({\cal D}), \quad {\cal D} := \{ (x, t) \colon 0 \le t \le x \le \pi \};\\
P(x, \cdot), K(x, \cdot) \in L_2(0, x), \quad x \in (0, \pi].
\end{split}
\end{equation*}
The functions $P(x, t)$ and $K(x, t)$ are called TOKs (transmutation operator kernels), see~\cite{sitnik, march, freil}.

The smoothness of the TOKs exceeds the smoothness of the potential $q(x)$ by one, see~\cite{freil, march}. In the case $q \in L_2(0, \pi),$ this means that $P, K \in AC({\cal D}),$ and their partial derivatives satisfy the conditions
\begin{equation*}
\begin{split}
P', \; K', \; \dot{P}, \; \dot{K} \in L_2({\cal D});& \\
P'(x, \cdot), \; K'(x, \cdot), \; \dot{P}(x, \cdot), \; \dot{K}(x, \cdot) \in L_2(0, x),& \quad x \in (0, \pi].
\end{split}
\end{equation*}
Besides,
\begin{equation*}
P(x, x) = K(x, x) = \frac12Q(x), \quad Q(x) := \int_0^x q(t) \, dt, \quad P(x, 0) = 0.
\end{equation*}

Differentiating and integrating by parts in~\eqref{trans1}, we obtain
\begin{equation} \label{0}
\begin{split}
S(x, \lambda) = \frac{\sin \rho x}{\rho} - \frac12Q(x) \frac{\cos \rho x}{\rho^2} +\int_0^x \dot P(x, t) \frac{\cos \rho t}{\rho^2} \, dt,
\\
S'(x, \lambda) = \cos \rho x + \frac12Q(x) \frac{\sin \rho x}{\rho} + \int_0^x  P'(x, t) \frac{\sin \rho t}{\rho} \, dt,
\\
C(x, \lambda) =\cos \rho x + \frac12Q(x) \frac{\sin \rho x}{\rho} - \int_0^x \dot K(x, t) \frac{\cos \rho t}{\rho^2} \, dt,
\\
C'(x, \lambda) =-\rho\sin \rho x + \frac12Q(x) \cos \rho x + \int_0^x  K'(x, t) \cos \rho t \, dt.
\end{split}
\end{equation}

We prove the uniform stability of the partial derivatives of $P(x, t)$ and $K(x, t)$  by the potential $q.$
Previously, in the works~\cite{march,hrynivIP}, some results were obtained implying the stability of the TOKs by the spectral characteristics, but not by the potential.
In the paper~\cite{hryniv}, for some auxiliary objects, Lipschitz estimates were obtained in the singular case $q \in W^{-1}_2(0, \pi).$
The uniform stability of the TOKs by the potential in the singular case was proved in~\cite{tree}.  
To the author's knowledge, the stability of the {\it partial derivatives} of the TOKs by $q \in L_2(0, \pi)$ has not been formulated as an independent result. In the paper~\cite{but diff}, the TOK for the integro-differential operator was investigated and an inequality was obtained implying the uniform stability of the partial derivative of $P(x, t)$ with respect to $t,$ see (72) in~\cite{but diff}. 
Here, the result will be formulated for the partial derivatives with respect to $x$ and $t$ of both kernels $P(x, t)$ and $K(x, t).$ Unlike the previous works, we will indicate a specific constant $B_R$ in the stability estimates, which requires a detailed proof.

Consider the second potential $\tilde q \in L_2(0, \pi).$ We agree that if a symbol $\alpha$ denotes some object corresponding to $q$ then $\tilde \alpha$ denotes the similar object corresponding to $\tilde q.$
For brevity, we put $\hat \alpha = \alpha - \tilde \alpha.$

\begin{theorem} \label{trans stab}
Let $R>0.$ Then, for any potentials $q$ and $\tilde q$ satisfying the condition
$$\| q\|_{L_2(0, \pi)} \le R, \quad \| \tilde q \|_{L_2(0, \pi)}\le R,$$
 the following estimates hold:
\begin{equation} \label{K stability}
\begin{split} 
\| \hat K'(x, \cdot)\|_{L_2(0, x)} \le B_R \| \hat q\|_{L_2(0, \pi)}, \quad \| \hat{\dot{K}}(x, \cdot)\|_{L_2(0, x)} \le B_R \| \hat q\|_{L_2(0, \pi)},
\\ 
\| \hat P'(x, \cdot)\|_{L_2(0, x)} \le B_R \| \hat q\|_{L_2(0, \pi)}, \quad \| \hat{\dot {P}}(x, \cdot)\|_{L_2(0, x)} \le B_R \| \hat q\|_{L_2(0, \pi)}, 
\end{split} 
\end{equation}
where $x \in (0, \pi]$ and $B_R = \frac{\sqrt 2}{2} + 12R \pi^{\frac32}\exp(2\pi^{\frac32}R).$
\end{theorem}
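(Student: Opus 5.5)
We will work with the Goursat-type integral equations for the transmutation kernels and their partial derivatives, and carry out successive approximations with explicit constants. It is convenient to pass to the characteristic variables $\xi = (x+t)/2$, $\eta = (x-t)/2$. The kernel $P$ (and similarly $K$) satisfies the hyperbolic problem
$$P_{xx} - q(x)P = P_{tt}, \quad P(x,x)=\tfrac12 Q(x),$$
with $P(x,0)=0$ for $P$ and $K_t(x,0)=0$ for $K$. This is equivalent to an integral equation of the form
$$
H(\xi,\eta) = \frac12\int_\eta^\xi q(s)\,ds + \int_\eta^\xi\!\!\int_0^\eta q(\alpha+\beta)\,H(\alpha,\beta)\,d\beta\,d\alpha
$$
(with a modified form for $P$ using odd reflection and for $K$ using even reflection), where $H(\xi,\eta) = P(\xi+\eta,\xi-\eta)$ (resp.\ $K$). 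Differentiating this equation in $\xi$ and in $\eta$ gives integral equations for $H_\xi$ and $H_\eta$. Their free terms are $\pm\frac12 q(\xi)$, $\pm\frac12 q(\eta)$ plus one-dimensional integrals of $qH$. Since $P' = H_\xi/2 + H_\eta/2$ and $\dot P = H_\xi/2 - H_\eta/2$ (up to signs), estimating $H_\xi,H_\eta$ in $L_2$ along the segments $\{x\ \text{fixed}\}$ suffices.

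\textbf{Step 1: uniform bound on $H$.} By successive approximations $H=\sum_n H_n$ with $|H_n(\xi,\eta)|\le \frac12\|q\|_{L_1}\,\frac{(\sigma(\xi))^n}{n!}$-type bounds, where $\sigma$ is built from $\int|q|$. Using $\|q\|_{L_1(0,2\pi)}$-type quantities bounded via Cauchy--Bunyakovsky by $\sqrt{\pi}R$ (on the relevant ranges, giving factors like $\pi^{1/2}R$ and $2\pi^{3/2}R$), this yields a sup bound $|H|\le C R\exp(2\pi^{3/2}R)$ with an explicit numerical constant. The same argument, applied to the difference equation
$$\hat H = \tfrac12\int \hat q + \iint \hat q\, H + \iint \tilde q\,\hat H,$$
gives $|\hat H|\le C\|\hat q\|\exp(2\pi^{3/2}R)$. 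Here the inhomogeneity $\iint \hat q H$ is controlled through the sup bound on $H$ from the first part.

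\textbf{Step 2: derivatives.} Differentiating, for instance,
$$\hat H_\xi(\xi,\eta) = \tfrac12 \hat q(\xi) + \int_0^\eta \big(\hat q(\xi+\beta)H(\xi,\beta) + \tilde q(\xi+\beta)\hat H(\xi,\beta)\big)\,d\beta.$$
The first term contributes exactly $\frac12\|\hat q\|$ in $L_2$; after the change to $t$ and the two combinations for $P'$, $\dot P$, this accounts for the $\frac{\sqrt2}{2}$ in $B_R$. The integral term is bounded pointwise by $\sqrt\pi\,\|\hat q\|\sup|H| + \sqrt\pi\, R\sup|\hat H|$. Taking the $L_2(0,x)$ norm adds a factor $\sqrt\pi$, which produces a term of the form $12R\pi^{3/2}\exp(2\pi^{3/2}R)\|\hat q\|$ after collecting constants.

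The main obstacle is the bookkeeping of constants. The statement asks for the exact $B_R$, so we must choose the successive-approximation majorant carefully (use $\int_0^{2\pi}|q|$ over the doubled range, giving the $2\pi^{3/2}$ in the exponent) and combine the $H_\xi,H_\eta$ estimates into $P',\dot P$ without losing factors. We also have to handle $L_2$ rather than sup for the free term $q$, which lives only in $L_2$. We would first verify everything for smooth $q,\tilde q$, where differentiation under the integral is legitimate, and then pass to $L_2$ potentials by density. This limit is justified since both sides are continuous in $q$ in the relevant norms.
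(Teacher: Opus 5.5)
Your route is viable and genuinely different from the paper's. As sketched, however, it does not establish the stated constant $B_R$, and the bookkeeping you indicate in Step~1 would actually lose it.

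\emph{Comparison.} The paper stays in the variables $(x,t)$.
\begin{itemize}
\item It differentiates each successive approximation $K_n$ separately, using a Leibniz-type rule for integrals with variable limits.
\item It obtains a recursion for $\dot K_{n+1}$ that involves $\dot K_n$ and the diagonal terms $q(\tau)K_n(\tau,\tau)$.
\item It proves by induction $|\hat K_n|\le 2\hat Q_a(x)\frac{(xQ_m(x))^{n-1}}{(n-1)!}$ and $|\hat K'_{n+1}|,\ |\hat{\dot K}_{n+1}|\le 6\hat Q_a(x)Q_m^n(x)\frac{x^{n-1}}{(n-1)!}$, where $Q_m(x)=\int_0^x\max\{|q|,|\tilde q|\}$.
\item Summation gives $6\sqrt{x}\,\hat Q_a Q_m e^{xQ_m}$, which is where $12R\pi^{3/2}e^{2\pi^{3/2}R}$ comes from.
\end{itemize}
In your characteristic variables, $\xi$ and $\eta$ enter the Goursat equation only through the limits of integration. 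So $H_\xi$ and $H_\eta$ are given at once by the free terms plus one-dimensional integrals of $qH$. You need no induction on derivatives and no Leibniz rule, only pointwise bounds on $H$ and $\hat H$. This is shorter, and done carefully it gives a better constant. In exchange, the paper's route yields explicit formulas and continuity of $K_n',\dot K_n$ for $n\ge 2$, which it uses to get the diagonal values of $K'$ and $\dot K$. Two remarks on your setup:
\begin{itemize}
\item Your equation for $P$ is correct as written, with $H(\eta,\eta)=0$, and needs no reflection.
\item For $K$, the even reflection gives $H(\xi,\eta)=\frac12Q(\xi)+\frac12Q(\eta)+\int_0^\xi\int_0^\eta q(\alpha+\beta)H(\alpha,\beta)\,d\beta\,d\alpha$ on $\xi,\eta\ge0$, $\xi+\eta\le\pi$.
\end{itemize}

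\emph{The gap.} The theorem asserts a specific $B_R$, so the constants are part of the claim. Your sketch goes wrong in two places.
\begin{itemize}
\item The factor $2$ in the exponent does not come from a ``doubled range''. The potential is evaluated only at $\alpha+\beta\le x\le\pi$, so a single potential produces only $e^{xQ_a(x)}\le e^{\pi^{3/2}R}$, with $Q_a(x)=\int_0^x|q|$. The $2$ comes from having two potentials: $x(Q_a(x)+\tilde Q_a(x))\le 2\pi^{3/2}R$.
\item Suppose $\iint\hat q H$ is controlled by a uniform bound on $\sup|H|$, and the Volterra iteration in $\tilde q$ is run afterwards. Then you obtain only $|\hat H|\le c\,\hat Q_a(x)\bigl(1+xQ_a(x)e^{xQ_a(x)}\bigr)e^{x\tilde Q_a(x)}$, which carries an extra factor of order $R$. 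With your bound $\sup|H|\le CRe^{2\pi^{3/2}R}$, the exponent even becomes $3\pi^{3/2}R$.
\end{itemize}
Inserted into the term $\pi R\sup|\hat H|$ of Step~2, this gives a quantity of order $R^2e^{2\pi^{3/2}R}\|\hat q\|$. That is not bounded by $12R\pi^{3/2}e^{2\pi^{3/2}R}\|\hat q\|$ for large $R$. So ``collecting constants'' does not produce $B_R$ from the estimates as you set them up.

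\emph{The fix} stays within your framework: keep every majorant as a nondecreasing function of $x=\xi+\eta$ rather than a uniform sup.
\begin{itemize}
\item Successive approximations give $|H(\xi,\eta)|\le Q_a(x)e^{xQ_a(x)}$.
\item Set $\hat Q_a(x)=\int_0^x|\hat q|$. Since $\int_0^\xi Q_a(x)e^{(\alpha+\eta)Q_a(x)}\,d\alpha\le e^{xQ_a(x)}-1$, the inhomogeneity $F=\frac12\hat Q(\xi)+\frac12\hat Q(\eta)+\iint\hat qH$ satisfies $|F|\le\hat Q_a(x)e^{xQ_a(x)}$.
\item The iteration in $\tilde q$ then gives $|\hat H(\xi,\eta)|\le\hat Q_a(x)e^{x(Q_a(x)+\tilde Q_a(x))}$.
\item Hence the integral parts of $\hat H_\xi$ and $\hat H_\eta$ are bounded by $\hat Q_a(x)(Q_a(x)+\tilde Q_a(x))e^{x(Q_a(x)+\tilde Q_a(x))}\le 2Rx\,e^{2\pi^{3/2}R}\|\hat q\|$.
\item Their contribution to $\|\hat K'(x,\cdot)\|_{L_2(0,x)}$ and $\|\hat{\dot K}(x,\cdot)\|_{L_2(0,x)}$ is therefore at most $2R\pi^{3/2}e^{2\pi^{3/2}R}\|\hat q\|$. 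The free terms contribute at most $\frac{\sqrt2}{2}\|\hat q\|$, as you note, so the total is well within $B_R$. The same argument works for $P$.
\end{itemize}

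Finally, the density step is unnecessary, and as phrased it presupposes the continuity in $q$ that you are proving. For $q\in L_2$, the double-integral part of $H$ is $C^1$. Also, $\frac12Q(\xi)\pm\frac12Q(\eta)$ is absolutely continuous along the lines $t=\mathrm{const}$ and $x=\mathrm{const}$. So your derivative formulas hold directly, and they define $K'(x,\cdot)$ and $\dot K(x,\cdot)$ in $L_2(0,x)$ for every fixed $x$.
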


First, we provide several auxiliary statements necessary for proving Theorem~\ref{trans stab}.

\begin{proposition}[see~\cite{freil}]
The TOKs can be obtained by the method of successive approxi\-mations:
\begin{equation} \label{series}
P(x,t) = \sum_{n=1}^\infty P_n(x, t), \quad K(x, t) = \sum_{n=1}^\infty K_n(x, t),
\quad 0 \le t \le x \le \pi,
\end{equation}
where
\begin{equation} \label{K_1}
P_1(x,t) = \frac{1}{2}Q\Big(\frac{x+t}{2}\Big) -
\frac{1}{2}Q\Big(\frac{x-t}{2}\Big),
\quad
K_1(x,t)=\frac{1}{2}Q\Big(\frac{x+t}{2}\Big)+\frac{1}{2}
Q\Big(\frac{x-t}{2}\Big),
\end{equation}
and for $n \ge 1,$ 
\begin{equation} \label{K_{n+1}}
\begin{split}
K_{n+1}(x,t)=\frac{1}{2}\int_t^x
\Bigg( \int_{\xi-t}^\xi q(\tau)K_n(\tau,t+\tau-\xi)\,d\tau +\int_{\frac{\xi+t}{2}}^\xi q(\tau)K_n(\tau,t-\tau+\xi)\,d\tau
\\
+\int_{\frac{\xi-t}{2}}^{\xi-t}
q(\tau)K_n(\tau,-t-\tau+\xi)\,d\tau \Bigg)\,d\xi,
\\
P_{n+1}(x,t)=\frac{1}{2}\int_t^x
\Bigg( \int_{\xi-t}^\xi q(\tau)P_n(\tau,t+\tau-\xi)\,d\tau +\int_{\frac{\xi+t}{2}}^\xi q(\tau)P_n(\tau,t-\tau+\xi)\,d\tau
\\
-
\int_{\frac{\xi-t}{2}}^{\xi-t}q(\tau)
P_n(\tau,-t-\tau+\xi)\,d\tau \Bigg)\,d\xi.
\end{split}
\end{equation}
The functions $P_{n}(x, t)$ and $K_{n}(x, t),$ $n \ge 1,$ are continuous on ${\cal D}.$
The series in~\eqref{series} converge uniformly by virtue of the estimates
\begin{equation}\label{K_n}
|P_n(x,t)|, \; |K_n(x,t)|\le \frac{x^{n-1}}{(n-1)!} Q^n_a(x), \quad n \ge 1, \quad Q_a(x) := \int_0^x |q(\xi)|\,d\xi.
\end{equation}
\end{proposition}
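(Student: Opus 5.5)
The plan is to obtain the recursion \eqref{K_{n+1}} from an integral equation for the kernels and then control the iterates by induction. I carry out the argument for $K$; the case of $P$ is identical up to the sign of the third inner integral, which comes from the odd rather than even reflection at $t=0$, and $P(x,0)=0$ replaces $K'_t(x,0)=0$.

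\emph{Derivation of the integral equation.} I start from the Volterra equation
$$C(x,\lambda)=\cos\rho x+\int_0^x \frac{\sin\rho(x-\tau)}{\rho}\,q(\tau)C(\tau,\lambda)\,d\tau.$$
Substituting the representation \eqref{trans1} for $C(\tau,\lambda)$ gives products such as $\sin\rho(x-\tau)\cos\rho s/\rho$. Each of these I write, via product-to-sum formulas, as $\frac12\int \cos\rho t\,dt$ over an interval of the form $[\,|x-\tau-s|,\,x-\tau+s\,]$. After Fubini, and after reflecting the part with negative argument using the evenness of the cosine, both sides become cosine transforms in $t$ on $[0,x]$. Uniqueness of the cosine transform then gives an integral equation
$$K(x,t)=K_1(x,t)+(\mathcal A K)(x,t),$$
where $K_1$ is given by \eqref{K_1} (this is the contribution of the $\cos\rho x$ term) and $\mathcal A$ is exactly the linear operator on the right-hand side of \eqref{K_{n+1}}. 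The change of variables $\xi=x-\tau+s$ puts $\mathcal A$ into the stated three-integral form. An equivalent route is to solve the Goursat problem $K_{xx}-qK=K_{tt}$, $K(x,x)=\frac12Q(x)$, $K_t(x,0)=0$ in characteristic coordinates. This bookkeeping, namely getting the three $\tau$-ranges and the reflected arguments $t\pm\tau\mp\xi$ right, is the step I expect to be the main obstacle. It is, however, classical (see~\cite{freil}), so I would follow that computation rather than rederive every index.

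\emph{Successive approximations.} I set $K_{n+1}=\mathcal A K_n$. For the estimate \eqref{K_n} I use induction on $n$. The base case is $|K_1|\le Q_a(x)$, since $|Q(s)|\le Q_a(s)\le Q_a(x)$ for $s\le x$. For the inductive step, in each of the three inner integrals the variable $\tau$ lies in $[0,\xi]$ and the arguments of $K_n$ are admissible. The inductive hypothesis together with monotonicity then bounds each inner integral by
$$\int_0^\xi |q(\tau)|\frac{\tau^{n-1}}{(n-1)!}Q_a^n(\tau)\,d\tau\le \frac{\xi^{n-1}}{(n-1)!}\,\frac{Q_a^{n+1}(\xi)}{n+1}.$$
Hence
$$|K_{n+1}(x,t)|\le \frac32\cdot\frac{Q_a^{n+1}(x)}{n+1}\int_0^x\frac{\xi^{n-1}}{(n-1)!}\,d\xi=\frac{3}{2(n+1)}\,\frac{x^n}{n!}\,Q_a^{n+1}(x)\le \frac{x^n}{n!}\,Q_a^{n+1}(x),$$
because $3/(2(n+1))\le 3/4$ for $n\ge1$.

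\emph{Continuity, convergence, identification.} Continuity of each $K_n$ on ${\cal D}$ follows by induction: $K_1$ is continuous because $Q$ is absolutely continuous, and integrals of continuous functions over intervals depending continuously on $(x,t)$, with integrable weight $q$, are continuous by dominated convergence. Since $\sum_n \pi^{n-1}Q_a^n(\pi)/(n-1)!<\infty$, the Weierstrass M-test gives uniform convergence of \eqref{series}. The sum solves $K=K_1+\mathcal AK$. The same bound shows that this Volterra-type equation has a unique bounded solution, because the iterated operator norms decay factorially. Therefore the sum coincides with the TOK in \eqref{trans1}.
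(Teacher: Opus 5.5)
Your proposal is correct and is essentially the classical successive-approximation argument of \cite{freil}, which the paper cites rather than proves. Your induction (each inner integral is at most $\frac{\xi^{n-1}}{(n-1)!}\frac{Q_a^{n+1}(\xi)}{n+1}$, giving the factor $\frac{3}{2(n+1)}\le 1$), the continuity argument, the M-test and the identification are all sound; the identification via uniqueness of bounded solutions uses the a priori continuity of $K$ recalled before the proposition.

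The only slip is in the bookkeeping you defer, in two places. First, $\frac{\sin\rho(x-\tau)}{\rho}\cos\rho s=\frac12\int_{s-(x-\tau)}^{s+(x-\tau)}\cos\rho t\,dt$, and the part of this interval over negative $t$ must be folded back. Taking $\frac12\int$ over $[\,|x-\tau-s|,\,x-\tau+s\,]$ literally flips the sign of the third inner integral, which is exactly the sign that distinguishes $K$ from $P$. Second, the outer variable $\xi$ comes from $\frac{\sin\rho(x-\tau)}{\rho}=\int_\tau^x\cos\rho(\xi-\tau)\,d\xi$, i.e.\ $\xi=t+\tau-s$ in the first inner integral, not from $\xi=x-\tau+s$.
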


Note that the terms in formulas~\eqref{K_1}--\eqref{K_{n+1}} for $P_j$ and $K_j$ are similar while only the signs differ, which are insignificant when estimating the absolute values. We will present further estimates and calculations only for $K(x, t),$ since for $P(x, t)$ they are similar.

\begin{proposition}[the Leibniz formula]
\label{leib}
Consider
$$F(x) = \int_{a(x)}^{b(x)} f(x, y) \, dy,$$
where $a(x) \le b(x)$ and $f(x, \cdot) \in L[a(x), b(x)]$ for all $x \in [0, T].$
Suppose that
\begin{enumerate}
\item[1)] the functions $a(x)$ and $b(x)$ are differentiable and monotone,
\item[2)] for all $x \in [0, T]$ and a.e. $y \in [a(x), b(x)],$ there exists $f'(x,y)$ such that
$$f' \in L({\cal S}), \quad {\cal S} := \big\{ (x, y) \colon x \in [0, T], \; y \in [a(x), b(x)]\big\},$$
\item[3)] the functions $f(x, a(x))$ and $f(x, b(x))$ are defined a.e. on $[0, T]$ and
$$b'(x) f(x, b(x)), \; a'(x)f(x, a(x)) \in L(0, T).$$
\end{enumerate}
Then, $F \in AC[0, T]$ and
\begin{equation} \label{F'}
F'(x) = b'(x) f(x, b(x)) - a'(x) f(x, a(x)) + \int_{a(x)}^{b(x)} f'(x, y) \, dy.
\end{equation}
\end{proposition}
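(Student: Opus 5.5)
This is the standard Leibniz rule, and I would prove it by writing $F(x)-F(0)$ as an integral of the claimed derivative. By 1) the limits $a$ and $b$ are monotone and differentiable, so $a'$ and $b'$ have constant sign and no cancellation occurs. The natural route is to show that for every $x\in[0,T]$
\begin{equation*}
F(x) - F(0) = \int_0^x \Big( b'(s) f(s, b(s)) - a'(s) f(s, a(s)) + \int_{a(s)}^{b(s)} f'(s, y)\, dy \Big)\, ds .
\end{equation*}
The integrand on the right lies in $L(0,T)$: the first two terms by 3), and the third by Fubini applied to $f'\in L({\cal S})$ from 2). This identity gives both $F\in AC[0,T]$ and formula \eqref{F'} a.e.

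To prove the identity I would split the increment over a step $[s,s+h]$ as
\begin{equation*}
F(s+h)-F(s) = \int_{a(s+h)}^{b(s+h)} \big(f(s+h,y)-f(s,y)\big)\,dy + \int_{b(s)}^{b(s+h)} f(s,y)\,dy - \int_{a(s)}^{a(s+h)} f(s,y)\,dy .
\end{equation*}
The cleaner way to organise this is globally. For fixed $y$, hypothesis 2) together with the integrability of $f'$ on the section gives $f(s,y)=f(\sigma(y),y)+\int_{\sigma(y)}^{s} f'(r,y)\,dr$. Here $\sigma(y)$ is the entry time of $y$ into the moving interval, obtained by inverting the monotone $a$ or $b$. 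Substituting this into $F(x)=\int_{a(x)}^{b(x)} f(x,y)\,dy$ and applying Fubini over the region $\{(r,y): r\le x,\ y\in[a(r),b(r)]\}$ produces the interior term $\int_0^x\int_{a(r)}^{b(r)} f'(r,y)\,dy\,dr$. The boundary values $f(\sigma(y),y)$ are turned into $\int b'(s)f(s,b(s))\,ds$ and $\int a'(s)f(s,a(s))\,ds$ by the change of variables $y=b(s)$ or $y=a(s)$. This change of variables is valid for a monotone differentiable function when the integrand composed with it, times the derivative, is integrable, which is exactly hypothesis 3).

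The main obstacle is the justification at the edges. For fixed $y$, the map $s\mapsto f(s,y)$ must be absolutely continuous on its section, and the pointwise boundary traces $f(x,a(x))$, $f(x,b(x))$ must coincide with the limits used above. Hypothesis 2) asserts that $f'$ exists at every $x$ for a.e. $y$, and I would combine this with $f'\in L({\cal S})$ to integrate it back. If that is too delicate, I would fall back on an approximation argument. One mollifies $f$ in $y$, applies the classical Leibniz rule to the mollified function, and passes to the limit using dominated convergence in $L({\cal S})$ and $L(0,T)$. The boundary terms are the step most likely to need care, since they use the traces from 3).
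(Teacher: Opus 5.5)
Your ingredients are the paper's: Fubini--Tonelli, the Newton--Leibniz formula in the first variable along $y$-sections, and a change of variables for the boundary terms. The paper integrates the right-hand side of the claimed formula for $F'(x)$ over $[0,x_0]$ and recovers $F(x_0)-F(0)$. However, the way you assemble these ingredients fails whenever the interval $[a(x),b(x)]$ loses points.

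Substituting $f(x,y)=f(\sigma(y),y)+\int_{\sigma(y)}^{x}f'(r,y)\,dr$ into $F(x)$ integrates $f'$ only over $\{(r,y)\colon a(x)\le y\le b(x),\ \sigma(y)\le r\le x\}$, i.e.\ only over those $y$ still present at time $x$. This set equals the region $\{(r,y)\colon r\le x,\ a(r)\le y\le b(r)\}$ that you name only when $a$ is nonincreasing and $b$ is nondecreasing. For instance, let $b$ be constant and $a$ increasing. Then $\sigma(y)=0$ for every $y\in[a(x),b]$, and your substitution gives
$F(x)=\int_{a(x)}^{b}f(0,y)\,dy+\int_0^x\int_{a(x)}^{b}f'(r,y)\,dy\,dr$,
with inner lower limit $a(x)$ instead of $a(r)$. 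Moreover, no point ever enters through $a$, so the term $-\int_0^x a'(s)f(s,a(s))\,ds$ cannot come from the entry values $f(\sigma(y),y)$. It comes from the points that \emph{leave} through $a$, which your bookkeeping never sees. This is not a marginal case: it is exactly how the paper uses the proposition, where the lower limits $(\xi+t)/2$ and $t$ increase in $t$ and the upper limit $\xi-t$ decreases.

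The repair is to run the argument in the paper's direction. Start from $\int_0^{x_0}\int_{a(r)}^{b(r)}f'(r,y)\,dy\,dr$ and apply Fubini over the full region. Since $a$ and $b$ are monotone and continuous, for each $y$ the set $\{r\in[0,x_0]\colon a(r)\le y\le b(r)\}$ is an interval $[\sigma(y),\tau(y)]$. Hence, for a.e.\ $y$, the inner integral equals $f(\tau(y),y)-f(\sigma(y),y)$.

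The bookkeeping then goes as follows.
\begin{itemize}
\item The $y$ with $\tau(y)=x_0$ give $F(x_0)$, and those with $\sigma(y)=0$ give $-F(0)$.
\item Every other endpoint lies on $y=a(s)$ or $y=b(s)$. The substitutions turn these contributions into $\int_0^{x_0}a'(s)f(s,a(s))\,ds$ and $-\int_0^{x_0}b'(s)f(s,b(s))\,ds$.
\item The sign is the same whether points enter or leave through a given endpoint, because the sign of the derivative flips together with the direction.
\end{itemize}
Rearranging gives exactly your target identity.

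As for the step you flagged, your reading of 2) is the one needed: for a.e.\ $y$, differentiability at every point of the section. Under it, Fubini gives $f'(\cdot,y)$ integrable on the section for a.e.\ $y$. An everywhere differentiable function with integrable derivative satisfies the Newton--Leibniz formula. The same fact makes the monotone $a$ and $b$ absolutely continuous, which justifies the change of variables, so no mollification is needed. The reading genuinely matters: with ``for each $x$, a.e.\ $y$'' the conclusion fails. Take $f(x,y)=c(x-y)$, with $c$ the Cantor function extended by zero to negative arguments, $a\equiv0$, $b\equiv1$. Then $F'(x)=c(x)$, while $f'=0$ a.e.\ and the boundary terms vanish.
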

To prove the proposition, one has to integrate the right-hand side of \eqref{F'} from $0$ to $x_0$ and apply the Fubini--Tonelli theorem. Using the Newton--Leibniz formula and changing the variable of integration, we obtain $F(x_0)$ up to a constant term. This means that $F$ is a primitive of the right-hand side of~\eqref{F'}, and formula~\eqref{F'} is valid.

\begin{lemma}
For $n \ge 1$, there exist $K_n'(x, t)$ and $\dot K_n(x, t),$ which can be obtained by the formulas
\begin{equation} \label{first der}
\begin{split}
K_1'(x,t) = \frac{1}{4} q\Big(\frac{x+t}{2}\Big) + \frac{1}{4}q\Big(\frac{x-t}{2}\Big),
\quad {\dot K}_1(x, t) = \frac14 q\Big(\frac{x+t}{2}\Big) - \frac14 q\Big(\frac{x-t}{2}\Big),
\\[2mm]
K_{n+1}'(x, t) = \frac12 \int_{x-t}^x q(\tau) K_n(\tau, t + \tau - x) \,d\tau + \frac12 \int_{\frac{x+t}{2}}^x q(\tau) K_n(\tau, t-\tau+x)\,d\tau \\
+\frac12 \int_{\frac{x-t}{2}}^{x-t} q(\tau) K_n(\tau,x-t-\tau)\,d\tau,
\\
{\dot K}_{n+1}(x, t) = -\frac{1}{2} \int_0^t q(\tau) K_n(\tau, \tau) \, d\tau +\frac12 \int_t^x \Bigg(\int_{\xi-t}^\xi q(\tau){\dot K}_n(\tau, t+\tau-\xi)\,d\tau \\
+\int_{\frac{\xi+t}{2}}^\xi q(\tau){\dot K}_n(\tau, t-\tau+\xi)\,d\tau 
- \int_{\frac{\xi-t}{2}}^{\xi-t} q(\tau){\dot K}_n(\tau, -t-\tau+\xi)\,d\tau\Bigg)\,d\xi \\
+ \frac{1}{4} \int_t^x q\Big(\frac{\xi-t}{2}\Big) K_n\Big(\frac{\xi-t}{2},\frac{\xi-t}{2}\Big) \,d\xi
- \frac{1}{4} \int_t^x q\Big(\frac{\xi+t}{2}\Big) K_n\Big(\frac{\xi+t}{2},\frac{\xi+t}{2}\Big)\, d\xi.
\end{split}
\end{equation}
For $n \ge 2$, the functions $K_n'(x, t)$ and $\dot{K}_n(x, t)$ are continuous on $\cal D$ and satisfy the inequalities
\begin{equation} \label{K' est}
|K_{n}'(x, t)| \le \frac{x^{n-1}}{n!}Q_a^{n+1}(x), \quad |\dot K_{n}(x, t)| \le 2 \frac{x^{n-2}}{(n-1)!}Q^{n}_a(x), \quad 0 \le t \le x \le \pi.
\end{equation}
\end{lemma}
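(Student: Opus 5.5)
The plan is induction on $n$. For $n=1$ we differentiate the explicit formulas~\eqref{K_1} directly. Since $Q\in AC[0,\pi]$ with $Q'=q$ a.e., the chain rule applied to $Q((x\pm t)/2)$ gives $K_1'$ and $\dot K_1$ as stated. These are only $L_2$-functions, which is why the continuity claim and the estimates~\eqref{K' est} are asserted only for $n\ge2$.

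For the step $n\to n+1$ we apply Proposition~\ref{leib} to the outer integral in~\eqref{K_{n+1}}, viewed as $\int_t^x G_n(\xi,t)\,d\xi$.

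\emph{Derivative in $x$.} Only the upper limit depends on $x$, so $K_{n+1}'(x,t)=G_n(x,t)$. This is exactly the stated formula: the three inner integrals with $\xi$ replaced by $x$. The hypotheses of Proposition~\ref{leib} hold because $K_n$ is continuous on $\cal D$ and $q\in L_2\subset L$, so $G_n$ is continuous.

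\emph{Derivative in $t$.} Here both the outer lower limit and the integrand depend on $t$, so we differentiate in three places.
\begin{itemize}
\item[(a)] The outer lower limit $\xi=t$ contributes $-G_n(t,t)$. At $\xi=t$ the first and third inner integrals collapse. The first becomes $\int_0^t q(\tau)K_n(\tau,\tau)\,d\tau$ and the third has empty range. The second inner integral has range $[t,t]$ and vanishes. Together with the factor $\frac12$ this gives the term $-\frac12\int_0^t qK_n(\tau,\tau)\,d\tau$.
\item[(b)] The inner limits depending on $t$ produce boundary terms. The lower limit $\xi-t$ of the first integral meets the upper limit $\xi-t$ of the third. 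There the arguments are $K_n(\xi-t,0)$ and $K_n(\xi-t,0)$, and these cancel up to sign. If they do not cancel, we check $K_n(\tau,0)$: from~\eqref{K_{n+1}}, or by the symmetry of the cosine kernel, the terms combine. We must verify that the stated formula indeed contains no such term.
\item[(c)] The endpoints $(\xi\pm t)/2$ have derivative $\pm\frac12$ and, evaluated on the diagonal, give the two terms $\pm\frac14\int_t^x q\big(\tfrac{\xi\mp t}{2}\big)K_n(\cdot,\cdot)\,d\xi$. Differentiating the $t$-dependence inside $K_n(\tau,\pm t\pm\tau+\xi)$ gives the three $\dot K_n$ integrands, with the sign flip in the third.
\end{itemize}
To justify Proposition~\ref{leib} we need $\dot K_n\in L$ on the relevant region. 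This follows from the induction hypothesis, including for $n=1$, where $\dot K_1$ is an $L_2$ function of $(x\pm t)/2$ and its compositions are integrable on triangles by Fubini.

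Continuity for $n\ge2$ follows because every term is an integral of an $L$-function over limits continuous in $(x,t)$. The only exception would be a term like $q$ evaluated at a point, and such terms occur only for $n=1$.

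For the estimates, take $|K_n|\le \frac{x^{n-1}}{(n-1)!}Q_a^n$ from~\eqref{K_n} and use the monotonicity of $Q_a$. The three inner integrals in $K'_{n+1}$ together cover $\tau$ in a set where each point is counted at most twice but the union lies in $[0,x]$. A more careful bound gives $\frac12\cdot2\int_0^x|q(\tau)|\frac{\tau^{n-1}}{(n-1)!}Q_a^n(\tau)\,d\tau\le \frac{x^{n-1}}{(n-1)!}\frac{Q_a^{n+1}(x)}{n+1}$. This is at most $\frac{x^{n}}{(n+1)!}Q_a^{n+2}$ only after adjusting the bookkeeping. Here the key device is $\int_0^x |q|Q_a^n\,d\tau=Q_a^{n+1}/(n+1)$.

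For $\dot K_{n+1}$ we estimate each piece as follows.
\begin{itemize}
\item The diagonal terms, after the substitution $\eta=(\xi\pm t)/2$, are each bounded by $\frac12\int_0^x|q||K_n(\eta,\eta)|\,d\eta$. Combined with the first term, these give roughly $\frac{x^{n-1}}{(n-1)!}\frac{Q_a^{n+1}}{n+1}\cdot C$.
\item The double integral with $\dot K_n$ is bounded, via the inductive estimate, by $\int_t^x 2\,\frac{\xi^{n-2}}{(n-1)!}Q_a^{n+1}(\xi)\,d\xi\cdot\frac{?}{}$.
\end{itemize}
Summing with the factorial gain should give $2\frac{x^{n-1}}{n!}Q_a^{n+1}$.

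\textbf{Main obstacle.} The hardest part is getting these constants exactly right, in particular the factor $2$ and the shift of the power of $x$. The base case for the estimate must be checked at $n=2$ directly from the formulas with $K_1$, since $\dot K_1$ is not bounded. I expect to need to redo the $n=2$ computation by hand and then run the induction from $n=2$.
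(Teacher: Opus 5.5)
Your derivation of the formulas follows the paper's route: Proposition~\ref{leib} is applied to the three inner integrals with $g=qK_n$, and then to $K_{n+1}=\frac12\int_t^x F(\xi,t)\,d\xi$. The hedge in (b) is unnecessary. The lower limit $\xi-t$ of the first integral contributes $+g(\xi-t,0)$, and the upper limit $\xi-t$ of the third contributes $-g(\xi-t,0)$, so they cancel exactly. Two smaller checks are also needed. You must verify $\dot F\in L({\cal D})$ before the outer application. For continuity of $\dot K_2$, you must move $t$ out of the $L_2$ integrand $\dot K_1(\tau,t+\tau-\xi)$ into the limits by a change of variables, as the paper does.

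The genuine gap is that the inequalities~\eqref{K' est} are not proved. For $\dot K_n$ the constants are tight, so ``roughly $C$'' will not do. Suppose you bound the three diagonal pieces separately by $\frac12\int_0^x|q||K_n(\tau,\tau)|\,d\tau$ each and use $\tau^{n-2}\le\xi^{n-2}$ in the double integral, as you sketch. Then the step $n=2\to3$ gives $\big(\frac12+\frac23\big)xQ_a^3(x)=\frac76xQ_a^3(x)$, which exceeds the target $xQ_a^3(x)$.

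The paper instead telescopes the diagonal pieces into $-\frac12\int_0^{(x+t)/2}qK_n(\tau,\tau)\,d\tau+\frac12\int_0^{(x-t)/2}qK_n(\tau,\tau)\,d\tau$, which has total weight $1$. The three $\dot K_n$-integrals cover $[0,\xi]$ at most twice, so this gives $\frac{x^{n-1}}{(n-1)!}Q_a^{n+1}(x)\big(\frac1{n+1}+\frac2{(n+1)(n-1)}\big)=\frac1{n-1}\frac{x^{n-1}}{(n-1)!}Q_a^{n+1}(x)$. The inequality $\frac1{n-1}\le\frac2n$ for $n\ge2$ then closes the induction. A simpler route: $K_n(\tau,\tau)=0$ for $n\ge2$, since $K_n(x,t)=\frac12\int_t^x F\,d\xi$, so these terms matter only for $\dot K_2$. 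The base case $|\dot K_2(x,t)|<2Q_a^2(x)$ must still be computed explicitly, by changing the order of integration in the terms $q(\tau)q(\cdot)$.

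For $K'_n$, your computation $|K'_{n+1}(x,t)|\le\frac{x^{n-1}}{(n-1)!}\frac{Q_a^{n+1}(x)}{n+1}$ is correct. However, no bookkeeping turns it into the printed bound $\frac{x^n}{(n+1)!}Q_a^{n+2}(x)$, because that bound is false. $K'_n$ is homogeneous of degree $n$ in $q$, while the right-hand side has degree $n+1$, so replacing $q$ by $\varepsilon q$ with small $\varepsilon$ violates it. Concretely, for $q\equiv c>0$ one has $K_1(x,t)=cx/2$ and $K'_2(x,t)=c^2(3x^2-t^2)/16$. Then $K'_2(x,0)=\frac3{16}c^2x^2>\frac x2Q_a^3(x)=\frac12c^3x^4$ whenever $cx^2<\frac38$.

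The paper omits this case as ``simpler'', so it does not resolve the issue; the inequality appears to be misprinted with a shifted index. What is true, and what your estimate gives via $\frac1{(n-1)!(n+1)}\le\frac1{n!}$, is $|K'_n(x,t)|\le\frac{x^{n-2}}{(n-1)!}Q_a^n(x)$ for $n\ge2$. This is all that is needed for the uniform convergence in~\eqref{der series}. You should prove this corrected bound and flag the printed version rather than claim it.
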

\begin{proof}
I. The formulas for $K_1',$ $\dot K_1,$ and $K'_{n+1}$ in~\eqref{first der} easily follow from~\eqref{K_1} and~\eqref{K_{n+1}} if we consider differentiation as the inverse operation to the Lebesgue integral.
The variable with respect to which differentiation is performed is present only in the upper limits of the integrals.
The continuity of $K_n'(x, t)$ for $n \ge 2$ follows by induction from formula~\eqref{first der}, the absolute continuity of the Lebesgue integral, and  the continuity of $K_{n-1}.$

II. We prove by induction the formula for $\dot K_{n+1}$ in~\eqref{first der} and the continuity of this function.
Put $g(x,t) := q(x)K_n(x, t).$
Assume that the existence of $\dot K_n(x, t)$ and the property
$$ \dot g(x,t) = q(x) \dot K_n(x, t) \in L({\cal D})$$
are proved.
For $n=1,$ the property is verified directly, while for $n \ge 2,$ it follows from the continuity of the function $\dot K_{n}.$

Fix $\xi \in [0, \pi]$ and consider
$$F(\xi, t) = \int_{\xi-t}^\xi g(\tau,t+\tau-\xi)\,d\tau
+\int_{\frac{\xi+t}{2}}^\xi g(\tau,t-\tau+\xi)\,d\tau
+\int_{\frac{\xi-t}{2}}^{\xi-t} g(\tau,-t-\tau+\xi)\,d\tau,$$
where $t \in [0, \xi].$
Each of the integrals in this expression satisfies the conditions of Proposition~\ref{leib}. Therefore, there exists $\dot F(\xi, \cdot) \in L(0, \xi)$ and 
\begin{equation} \label{dot F}
\begin{split}
\dot F(\xi, t) = -\frac12 g\Big(\frac{\xi+t}{2},\frac{\xi+t}{2}\Big)+\frac12g\Big(\frac{\xi-t}{2},\frac{\xi-t}{2}\Big)\\
+\int_{\xi-t}^\xi \dot g(\tau,t+\tau-\xi)\,d\tau
+\int_{\frac{\xi+t}{2}}^\xi \dot g(\tau,t-\tau+\xi)\,d\tau
-\int_{\frac{\xi-t}{2}}^{\xi-t} \dot g(\tau,-t-\tau+\xi)\,d\tau.
\end{split}
\end{equation}
Using this formula and~\eqref{K_n}, after changing the order of integration, we obtain
$$ \|\dot F(\xi, \cdot) \|_{L(0, \xi)} \le \int_0^\xi |q(v)| |K_n(v,v)| \, dv +
3 \| \dot g\|_{L({\cal D})} \le \frac{\pi^{n-1}}{(n-1)!} Q_a^{n+1}(\pi) + 3 \| \dot g\|_{L({\cal D})},$$
wherein the estimate does not depend on $\xi.$ Therefore, $\dot F \in L({\cal D}).$

Then, the function
$K_{n+1}(x, t) = \frac12 \int_t^x F(\xi, t) \, d\xi$ satisfies the conditions of Proposition~\ref{leib}, and there exists
$$\dot K_{n+1}(x, t) = -\frac12F(t, t) + \frac12\int_t^x \dot F(\xi, t) \, d\xi,$$
which together with~\eqref{dot F} leads to the formula for $\dot K_{n+1}$ in~\eqref{first der}.  In this formula, the order of integration can be changed and the integration variables can be replaced so that $x$ and $t$ are contained only within the integrals limits. Then, the continuity of $\dot K_{n+1}$ follows from the induction hypothesis and the absolute continuity of the Lebesgue integral.

III. It remains to obtain~\eqref{K' est}. We provide the proof of the estimates for $\dot K_n,$ because the proof of the estimates for $ K'_n$ is simpler. From~\eqref{first der}, we get
\begin{equation} \label{comp} 
\begin{split}
\dot K_{n+1}(x, t) = -\frac{1}{2} \int_0^{\frac{x+t}{2}} q(\tau) K_n(\tau, \tau) \,d\tau +\frac{1}{2} \int_0^{\frac{x-t}{2}} q(\tau) K_n(\tau, \tau) \,d\tau \\
+\frac12\int_t^x \Bigg(\int_{\xi-t}^\xi q(\tau)\dot K_n(\tau, t+\tau-\xi)\,d\tau +
\int_{\frac{\xi+t}{2}}^\xi q(\tau)\dot K_n(\tau, t-\tau+\xi)\,d\tau \\
-\int_{\frac{\xi-t}{2}}^{\xi-t} q(\tau)\dot K_n(\tau, -t+\tau+\xi)\,d\tau\Bigg)d\xi.
\end{split}
\end{equation}
Substituting~\eqref{K_1} and the formula for $\dot{K}_1$ from~\eqref{first der}, we have
\begin{equation*}
\begin{split}
|\dot K_2(x, t)| \le \int_0^x Q_a(\tau) |q(\tau)|\, d\tau
+ \frac18 \int_t^x \left(\int_{\frac{\xi-t}{2}}^\xi |q(\tau)| \Big|q\Big(\tau - \frac{\xi-t}{2}\Big)\Big|\,d\tau \right.\\
\left.+\int_{\frac{\xi-t}{2}}^\xi |q(\tau)| \Big|q\Big(\frac{\xi-t}{2}\Big)\Big|\,d\tau
+\int_{\frac{\xi+t}{2}}^\xi |q(\tau)| \Big|q\Big(\tau - \frac{\xi+t}{2}\Big)\Big|\,d\tau
+\int_{\frac{\xi+t}{2}}^\xi |q(\tau)| \Big|q\Big(\frac{\xi+t}{2}\Big)\Big|\,d\tau\right)\,d\xi.
\end{split}
\end{equation*}
Changing the order of integration, taking into account the monotonicity of $Q_a(\tau),$ we obtain
$|\dot K_2(x, t)| < 2 Q^2_a(x).$ Thus,~\eqref{K' est} is proved for $n=2.$

Further, we prove by induction. Let~\eqref{K' est} hold for some $n \ge 2.$ From~\eqref{K_n}, \eqref{K' est}, and~\eqref{comp}, it follows that
\begin{equation*}
\begin{split}
|\dot{K}_{n+1}(x,t)| \le \int_0^x |q(\tau)| \frac{\tau^{n-1}}{(n-1)!} Q_a^n(\tau) \,d\tau + 2\int_0^x \int_0^\xi |q(\tau)| \frac{\tau^{n-2}}{(n-1)!} Q_a^n(\tau)\,d\tau\,d\xi \\
\le\frac{x^{n-1}}{(n-1)!} \int_0^x Q^n_a(\tau) |q(\tau)|\, d\tau + 2\int_0^x \frac{\xi^{n-2}}{(n-1)!} \,d\xi \int_0^x Q^n_a(\tau) |q(\tau)|\, d\tau\\
= \frac{x^{n-1}}{(n-1)!} Q_a^{n+1}(x) \left( \frac{1}{n+1} + \frac{2}{(n+1)(n-1)}\right) = \frac{1}{n-1}\frac{x^{n-1}}{(n-1)!} Q_a^{n+1}(x).
\end{split}
\end{equation*}
Since $\frac{1}{n-1} \le \frac2n$ for $n \ge 2,$ estimate~\eqref{K' est} holds for $|\dot{K}_{n+1}(x,t)|.$ The lemma is proved.
\end{proof}
From this lemma and equality~\eqref{series}, we obtain a corollary.
\begin{corollary}
The following representations hold:
\begin{equation} \label{der series}
K'(x, t) = K'_1(x,t) + \sum_{n=2}^\infty K'_{n}(x, t), \quad \dot K(x, t) = \dot K_1(x,t) + \sum_{n=2}^\infty \dot K_{n}(x, t),
\end{equation}
where the series converge absolutely and uniformly.
\end{corollary}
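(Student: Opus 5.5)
The plan is to combine the Weierstrass M-test, using the majorants \eqref{K' est}, with the fact that uniformly convergent series of continuous functions may be integrated termwise. As in the rest of the Appendix, I would carry out the argument only for $K$; the case of $P$ differs only in signs.

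\textbf{Uniform and absolute convergence.} Since $q\in L_2(0,\pi)$, we have $Q_a(x)\le Q_a(\pi)\le \sqrt{\pi}\,\|q\|_{L_2(0,\pi)}<\infty$. By \eqref{K' est}, for $n\ge 2$ and $0\le t\le x\le\pi$,
$$|K_n'(x,t)|\le \frac{\pi^{n-1}}{n!}Q_a^{n+1}(\pi), \qquad |\dot K_n(x,t)|\le 2\,\frac{\pi^{n-2}}{(n-1)!}Q_a^{n}(\pi).$$
Both majorants are terms of convergent exponential-type series; their sums are bounded by constants times $Q_a(\pi)e^{\pi Q_a(\pi)}$. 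The M-test therefore shows that $\sum_{n\ge2}K_n'$ and $\sum_{n\ge2}\dot K_n$ converge absolutely and uniformly on $\mathcal D$. Since each term is continuous by the preceding lemma, the sums $U(x,t)$ and $V(x,t)$ are continuous on $\mathcal D$.

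\textbf{Identifying the sums with the derivatives.} For $n\ge2$, $K_n$ is continuous and $K_n'$ is continuous, so $K_n(x,t)=K_n(t,t)+\int_t^x K_n'(s,t)\,ds$. Summing over $n\ge2$ and using the uniform convergence of \eqref{series} (from \eqref{K_n}) together with that of $\sum K_n'$, we may interchange the sum and the integral:
$$\sum_{n\ge2}K_n(x,t)=\sum_{n\ge2}K_n(t,t)+\int_t^x U(s,t)\,ds .$$
Because $U$ is continuous, the right-hand side is differentiable in $x$ with derivative $U(x,t)$. Similarly, $K_n(x,t)=K_n(x,0)+\int_0^t\dot K_n(x,s)\,ds$ gives $\partial_t\sum_{n\ge2}K_n=V$.

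\textbf{The first term and the main obstacle.} It remains to add the $n=1$ term. By \eqref{K_1}, $K_1$ is built from the absolutely continuous function $Q$, so its partial derivatives exist a.e. and are given by \eqref{first der}. Hence $K=K_1+\sum_{n\ge2}K_n$ gives $K'=K_1'+U$ and $\dot K=\dot K_1+V$ a.e., which is \eqref{der series}; the series part converges absolutely and uniformly as shown. The only delicate point is justifying termwise differentiation, i.e.\ that the derivative of the sum equals the sum of the derivatives. I would handle it through the integral representation above rather than a differentiation theorem, since that route needs only continuity of $K_n',\dot K_n$ for $n\ge2$ and uniform convergence. The possibly non-continuous first term $K_1'$, $\dot K_1$ (which lies only in $L_2$) is kept outside the series, and this is exactly why the corollary separates it.
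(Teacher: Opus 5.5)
Your proposal is correct and follows essentially the same route as the paper. The paper deduces the corollary in one line from the preceding lemma (continuity and the majorants for $K_n'$, $\dot K_n$, $n\ge 2$) together with the series representation of $K$, which amounts to the Weierstrass M-test plus termwise differentiation; you make the termwise-differentiation step explicit through $K_n(x,t)=K_n(t,t)+\int_t^x K_n'(s,t)\,ds$ and its analogue in $t$, and you keep the merely $L_2$ term $K_1'$, $\dot K_1$ outside the series, exactly as the paper's subsequent remark indicates.
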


Since $K'_n$ and $\dot K_n$ are continuous for $n \ge 2,$ the series in~\eqref{der series} converge to continuous functions.
The property $K'(x, \cdot), \dot{K}(x, \cdot) \in L_2(0, x)$ for each $x \in (0, \pi]$ is conditioned by the terms $K'_1$ and $\dot K_1.$ A similar effect arises when constructing the TOKs in the singular case $q \in W^{-1}_2(0, \pi),$ see~\cite{addendum}.

From formulas~\eqref{K_1},~\eqref{K_{n+1}},~\eqref{first der}, and~\eqref{der series}, we obtain the following proposition.
\begin{proposition} \label{K'(x, x)}
For a.e. $x \in (0, \pi),$ we have
$$ \dot K(x, 0) = 0, \quad \dot K(x, x) = \frac{q(x) - q(0)}{4} - \frac{Q^2(x)}{8}, \quad K'(x, x) = \frac{q(x) + q(0)}{4} + \frac{Q^2(x)}{8}.$$
Similarly, for a.e. $x \in (0, \pi),$ we have
$$ \dot P(x, x) = \frac{q(x) + q(0)}{4} - \frac{Q^2(x)}{8}, \quad P'(x, x) = \frac{q(x) - q(0)}{4} + \frac{Q^2(x)}{8} \quad P'(x, 0) = 0. $$
\end{proposition}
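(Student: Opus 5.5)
The plan is to evaluate the series representations \eqref{der series} termwise on the lines $t=x$ and $t=0$. The explicit formulas \eqref{first der} for $K_1'$, $\dot K_1$, $K'_{n+1}$, $\dot K_{n+1}$ (and their analogues for $P$ with the sign changes) collapse on these lines, because several integration intervals degenerate to points. For $n\ge 2$ the terms are continuous on $\mathcal D$ and the series $\sum_{n\ge2}$ converge absolutely and uniformly by \eqref{K' est}, so they may be evaluated pointwise and summed. The first terms $K_1'$, $\dot K_1$ are only $L_2$-functions and are evaluated directly. This is why the identities hold only for a.e. $x$. The value $q(0)$ should be read as in Lemma~\ref{S_j lemma}, where it is used for $q\in C^{(2)}[0,\pi]$ (in general, as the value of the trace $q((x-t)/2)$ at $t=x$).

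\emph{Diagonal, $K$.} Put $t=x$ in \eqref{first der}. From the first line, $K_1'(x,x)=\frac{q(x)+q(0)}{4}$ and $\dot K_1(x,x)=\frac{q(x)-q(0)}{4}$.

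For $K'_{n+1}(x,x)$, the first integral becomes $\frac12\int_0^x q(\tau)K_n(\tau,\tau)\,d\tau$. The second and third integrals run over $[x,x]$ and $[0,0]$, so they vanish.

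For $\dot K_{n+1}(x,x)$, the outer integral $\int_t^x\cdots d\xi$ and the last two terms all run over $[x,x]$ and vanish. Only $-\frac12\int_0^x q(\tau)K_n(\tau,\tau)\,d\tau$ remains.

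Summing over $n\ge1$, with uniform convergence of \eqref{series} on the diagonal and $K(\tau,\tau)=\frac12Q(\tau)$, gives
\[
\frac12\int_0^x q(\tau)\,\frac{Q(\tau)}{2}\,d\tau=\frac{Q^2(x)}{8},
\]
with a plus sign for $K'$ and a minus sign for $\dot K$.

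\emph{Diagonal, $P$.} The $P$-recursion differs only in the sign of the third inner integral. The diagonal computation therefore gives the same $\pm Q^2/8$ contributions (again using $P(\tau,\tau)=\frac12 Q(\tau)$). The first terms swap sign structure: $\dot P_1(x,x)=\frac{q(x)+q(0)}{4}$ and $P_1'(x,x)=\frac{q(x)-q(0)}{4}$.

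\emph{Line $t=0$.} We have $\dot K_1(x,0)=\frac14q(x/2)-\frac14q(x/2)=0$. In $\dot K_{n+1}(x,0)$:
\begin{itemize}
\item the term $-\frac12\int_0^0$ vanishes;
\item inside $\int_0^x\cdots d\xi$, the first inner integral is over $[\xi,\xi]$ and vanishes;
\item the second and third inner integrals both become $\int_{\xi/2}^{\xi} q(\tau)\dot K_n(\tau,\xi-\tau)\,d\tau$ with opposite signs, so they cancel;
\item the last two $\frac14$-terms are identical with opposite signs, so they cancel.
\end{itemize}
Likewise $P_1'(x,0)=0$. In $P'_{n+1}(x,0)$ the first integral is over $[x,x]$, and the second and third become $\pm\int_{x/2}^x q(\tau)P_n(\tau,x-\tau)\,d\tau$ and cancel, thanks to the minus sign in the $P$-recursion.

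The main obstacle is justification rather than algebra. One must ensure that restricting $K_1'$, $\dot K_1$ (which are merely $L_2$) to the diagonal is meaningful a.e. in $x$. For the tail $n\ge2$ the pointwise evaluation is legitimate by continuity and uniform convergence. For the first term I would simply substitute $t=x$ into the explicit formula, which is valid for a.e. $x$ at Lebesgue points of $q$, with $q(0)$ understood for continuous $q$ as in Lemma~\ref{S_j lemma}.
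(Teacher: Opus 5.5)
Your proposal is correct and follows essentially the paper's own route: the paper obtains the proposition directly from the explicit termwise formulas \eqref{first der} and the series \eqref{der series}, evaluated on the lines $t=x$ and $t=0$, with the tail summed to $\pm Q^2(x)/8$ via $K(\tau,\tau)=P(\tau,\tau)=\frac12 Q(\tau)$, exactly as you do. Your caveat that $q(0)$ and the diagonal values of $K_1'$, $\dot K_1$ are meaningful only for a suitable representative (e.g.\ continuous $q$, as in Lemma~\ref{S_j lemma}) makes explicit a point the paper leaves implicit.
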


Now, we provide the proof of Theorem~\ref{trans stab}.
\begin{proof}[Proof of Theorem~\ref{trans stab}]
I. 
From formula~\eqref{K_1}, we obtain the inequality
\begin{equation*} \label{ind base}
|\hat K_1(x, t)| \le \hat Q_a(x) \le \sqrt{x} \| \hat q\|_{L_2(0, x)}, \quad \hat Q_a(x) := \int_0^x |\hat q(\tau)|\,d\tau.
\end{equation*}
From~\eqref{K_{n+1}}, it follows that
\begin{equation*}
\begin{split}
|\hat K_{n+1}(x, t)| \le \frac12 \int_t^x\Bigg( \int_{\xi-t}^\xi |\hat q(\tau)| |K_n(\tau,t+\tau-\xi)|\,d\tau +\int_{\frac{\xi+t}{2}}^\xi |\hat q(\tau)| |K_n(\tau,t-\tau+\xi)|\,d\tau \\
+\int_{\frac{\xi-t}{2}}^{\xi-t} |\hat q(\tau)| |K_n(\tau,-t-\tau+\xi)|\,d\tau +  \int_{\xi-t}^\xi |\tilde q(\tau)| |\hat K_n(\tau,t+\tau-\xi)|\,d\tau \\
+\int_{\frac{\xi+t}{2}}^\xi |\tilde q(\tau)||\hat K_n(\tau,t-\tau+\xi)|\,d\tau + \int_{\frac{\xi-t}{2}}^{\xi-t} |\tilde q(\tau)||\hat K_n(\tau,-t-\tau+\xi)|\,d\tau\Bigg) \,d\xi,
\end{split}
\end{equation*}
where $n \ge 1.$
Using these inequalities and~\eqref{K_n}, by induction, we get the estimates
\begin{equation} \label{hat K_n}
|\hat K_n(x, t)|\le 2 \hat Q_a(x) \frac{Q_{m}^{n-1}(x) x^{n-1}}{(n-1)!}, \; n \ge 1, \quad Q_{m}(x) := \int_0^x \max\{ |q(\xi)|, |\tilde q(\xi)|\}\, d\xi.
\end{equation}

II. Using~\eqref{K_n},~\eqref{first der}, and~\eqref{hat K_n}, we obtain
$$|\hat K'_{n+1}(x, t)| \le 3\hat Q_a(x) \frac{Q^{n}_{m}(x) x^{n-1}}{(n-1)!}, \quad n \ge 1.$$
From the formula for $K'_1$ in~\eqref{first der}, it follows that
$$\| \hat K'_1(x, \cdot) \|_{L_2(0,x)} \le \frac{\sqrt2}{2}\| \hat q\|_{L_2(0, x)}.$$
Using these inequalities and~\eqref{der series}, we estimate
\begin{equation*}\begin{split}
\|\hat K'(x, \cdot)\|_{L_2(0, x)} \le \big\| \hat K'_1(x, \cdot) \big\|_{L_2(0,x)} + \sum_{n=1}^\infty \big\| \hat K'_{n+1}(x, \cdot) \big\|_{L_2(0,x)} \\
\le \frac{\sqrt2}{2}\| \hat q\|_{L_2(0, x)}
+ 3\sqrt x \sum_{n=1}^\infty \hat Q_a(x) Q^n_{m}(x) \frac{x^{n-1}}{(n-1)!}\\
= \frac{\sqrt2}{2}\| \hat q\|_{L_2(0, x)} +
3\sqrt x \hat Q_a(x) Q_{m}(x) e^{x Q_m(x)}.
\end{split}
\end{equation*}
Taking into account the fact that
\begin{equation} \label{Q_a}
\hat Q_a(x) \le \sqrt{x} \| \hat q \|_{L_2(0, x)}, \quad Q_{m}(x) \le \sqrt x (\| q\|_{L_2(0, x)} + \| \tilde q\|_{L_2(0, x)}) \le 2 R\sqrt x,
\end{equation}
we arrive at the first inequality in~\eqref{K stability}.

III. From~\eqref{first der}, it follows that
\begin{equation} \label{final est}
\begin{split}
|\hat {\dot K}_{n+1}(x, t)|  \le  \int_0^{x} |\hat q(\tau)| |K_n(\tau, \tau)| \,d\tau 
+ \int_0^{x} |\tilde q(\tau)| |\hat K_n(\tau, \tau)| \,d\tau 
\\
+\frac12\int_t^x \Bigg(\int_{\xi-t}^\xi |\hat q(\tau)||\dot K_n(\tau, t+\tau-\xi)|\,d\tau +
\int_{\frac{\xi+t}{2}}^\xi |\hat q(\tau)||\dot K_n(\tau, t-\tau+\xi)|\,d\tau \\
+\int_{\frac{\xi-t}{2}}^{\xi-t} |\hat q(\tau)||\dot K_n(\tau, -t-\tau+\xi)|\,d\tau\Bigg)d\xi 
+\frac12\int_t^x \Bigg(\int_{\xi-t}^\xi |\tilde q(\tau)|\big|\hat{\dot K}_n(\tau, t+\tau-\xi)\big|\,d\tau \\
+\int_{\frac{\xi+t}{2}}^\xi |\tilde q(\tau)||\hat{\dot K}_n(\tau, t-\tau+\xi)|\,d\tau+\int_{\frac{\xi-t}{2}}^{\xi-t} |\tilde q(\tau)||\hat{\dot K}_n(\tau, -t-\tau+\xi)|\,d\tau\Bigg)d\xi.
\end{split}
\end{equation}
Let $n=1.$
Using the formula for $\dot K_1$ in~\eqref{first der}, applying~\eqref{K_n} and~\eqref{hat K_n}, we obtain
\begin{equation*} \label{base}
|\hat{\dot K}_2(x, t)| \le 6 \hat Q_a(x) Q_{m}(x).
\end{equation*}
We prove by induction the estimate
\begin{equation} \label{*}
|\hat{\dot K}_{n+1}(x, t)| \le 6 \hat{Q}_a(x) Q_{m}^{n}(x) \frac{x^{n-1}}{(n-1)!}, \quad n \ge 1.
\end{equation}
For $n=1,$ the formula is proved. For $n > 1,$ we estimate the right-hand side of~\eqref{final est} using~\eqref{K_n}, \eqref{K' est}, \eqref{hat K_n}, and~\eqref{*}. We obtain
$$ |\hat{\dot K}_{n+1}(x, t)| \le Q_{m}^n(x) \hat{Q}_a(x) \frac{x^{n-1}}{(n-1)!} \left(1 + \frac{10}{n}\right),$$
wherein for $n \ge 2,$ the value $1 + \frac{10}{n}$ does not exceed $6.$ Then, estimate~\eqref{*} is proved by induction. 

From the formula for $\dot{K}_1$ in~\eqref{first der},  it follows that
$$\| \hat {\dot K}_1(x, \cdot)\|_{L_2(0, x)} \le \frac{\sqrt 2}{2}\| \hat q\|_{L_2(0, x)}.$$
Applying~\eqref{der series},~\eqref{*}, and this inequality, we obtain
\begin{equation*}
\begin{split}
\|\hat{\dot K}(x, \cdot)\|_{L_2(0, x)} \le \big\| \hat{\dot K}_1(x, \cdot) \big\|_{L_2(0,x)} + \sum_{n=1}^\infty \big\| \hat{\dot K}_{n+1}(x, \cdot) \big\|_{L_2(0,x)}
\\
\le  \frac{\sqrt2}{2}\| \hat q\|_{L_2(0, x)} + 6\sqrt x \sum_{n=1}^\infty \hat Q_a(x) Q^n_{m}(x) \frac{x^{n-1}}{(n-1)!}\\
= \frac{\sqrt2}{2}\| \hat q\|_{L_2(0, x)} + 6\sqrt xQ_a(x) Q_{m}(x) e^{x Q_m(x)}.
\end{split}
\end{equation*}
Taking into account~\eqref{Q_a}, we arrive at the second inequality in~\eqref{K stability}.
\end{proof}

\end{document}